\newtheorem{theorem}{Theorem}[section]
\newtheorem{corollary}{Corollary}[section]
\newtheorem{definition}{Definition}[section]
\newtheorem{proposition}{Proposition}[section]
\numberwithin{equation}{section}
\newcommand{\ave}[1]{ \left\{\!\!\left\{ {#1} \right\}\!\!\right\} }
\newcommand{\jump}[1]{ \left[ \! \left[ {#1} \right] \! \right] }
\newcommand{\triple}[1]{ |\!|\!|{#1}|\!|\!| }
\title{Boundary-field formulation for transient electromagnetic scattering by dielectric scatterers and coated conductors}
\author{%
George C. Hsiao$^{1}$ \ $\cdot$\
Tonatiuh S\'anchez-Vizuet$^{2*}$ \ $\cdot$ \
Wolfgang L. Wendland$^{3}$ }
\begin{document}
\date{}
\maketitle
\vspace{-0.75cm}
\begin{center}
{\footnotesize 
$^1$ University of Delaware \qquad
$^2$ The University of Arizona \qquad
$^3$ University of Stuttgart \\
{\scriptsize E-mails: {\tt ghsiao@udel.edu }/ {\tt tonatiuh@arizona.edu } / {\tt wolfgang.wendland@mathematik.uni-stuttgart.de } }\\
*Corresponding author\\[3ex]
\textit{Dedicated to Ernst P. Stephan, collaborator and friend on the occasion of his 75th birthday}
}
\end{center}

\begin{abstract}
We examine the transient scattered and transmitted fields generated when an incident electromagnetic wave impinges on a dielectric scatterer or a coated conductor embedded in an infinite space. By applying a boundary-field equation method, we reformulate the problem in the Laplace domain using the electric field equation inside the scatterer and a system of boundary integral equations for the scattered electric field in free space. To analyze this nonlocal boundary value problem, we replace it by an equivalent boundary value problem. Existence, uniqueness and stability of the weak solution to the equivalent BVP are established in appropriate function spaces in terms of the Laplace transformed variable. The stability bounds are translated into time-domain estimates which determine the regularity of the solution in terms of the regularity of the problem data. These estimates can be easily converted into error estimates for a numerical discretization on the convolution quadrature for time evolution.   
\end{abstract}

\medskip
\noindent
{\small{\bf Keywords}{:} 
Transient wave scattering, electrodynamics, time-dependent boundary integral equations, convolution quadrature.
}

\noindent
{\bf Mathematics Subject Classifications (2020)}: 78A40, 78M10, 78M15, 45A05.

\section{Introduction}
This paper is concerned with a basic scattering problem in time-dependent electromagnetics that can be simply described as follows: An incoming time-dependent electromagnetic wave impinges on a dielectric obstacle embedded in free space---such as air or vacuum. The objective of this paper is to examine the subsequent electromagnetic fields scattered by and transmitted into the dielectric obstacle by employing similar techniques to the ones developped by the authors in their recent paper on elastodynamic scattering \cite{HsSaWe:2022}. 

Electromagnetic scattering is a wide topic that has been studied for more than a century. The scientific literature in the electromagnetic scattering is abundant. However, most of the early work and contributions can be classified as what is called  {\em frequency-domain  scattering } in the sense that formulations are usually based on time-harmonic Maxwell's equations. For relevant references with respect to mathematical methods, boundary  integral equation  methods for direct and inverse scattering, optimization methods in electromagnetic radiation, low frequency  scattering, and numerical methods in electromagnetics,  see, for instance, Cessenat \cite{Ce:1996},  N\'{e}d\'{e}lec \cite{Ne:2001}, Stratton \cite{St:1941},  Colton \& Kress \cite{CoKe:1983} (updated as \cite{CoKe:1992}),  Martin \& Ola \cite{MaOl:1993}, Dassios \& Kleinman \cite{DaKl:2000}, Costabel, Darrigrand \&  Kon\'e\cite{CoDaKo:2010}, Angell \& Kirsch \cite{AnKi:2004}, Hsiao \& Kleinman \cite{HsKl:1997}, Hsiao, Monk \& Nigam \cite{HsMoNi:2002}, Stratton \& Chu \cite{StCh:1939}, M\"{u}ller \cite {Mu:1969},
Costabel \& Stephan \cite{CoSt:1988}, Kirsch.\cite{Ki:1989}, Stephan \cite{St:1986}, and  Poggio \& Miller \cite{PoMi:1973}, 
 to name a few. 

In  the monograph of Jones \cite[p. 44]{Jo:1964} and in the chapter of \cite[p.179]{PoMi:1973} by Poggio \& Miller,  electric  field  integral equations in space-time domain were derived by making use of the Fourier transform of the corresponding electric field integral equation in space - frequency  domain. Time-domain scattering has received renewed attention in the 21st. century. Time-domain integral equations were used in the papers  Weile, Pisharody, Chen, Shanker \& Michielssen \cite{WePiChShMi:2004}, and Pisharody \& Weile \cite{PiWe:2006}  for time- dependent electromagnetic scattering. In  Cakoni, Haddar \& Lechleiter  \cite{CaHaLe:2019}, the factorization method for a far field inverse scattering problem in the time domain is employed. In the recent book by Tong \& Chew \cite{ToCh:2020}, numerical solutions of time domain electric field and magnetic field equations are  presented. Since Convolution Quadrature (CQ) was introduced  and proved by Lubich \cite{Lu:1988, Lu:1994} to be an efficient numerical scheme for computing time-domain boundary integral equations, the study of time-domain scattering based on the Laplace transform approach has become very attractive and  popular  (see, e.g.  Chen, Monk, Wang \& Weile \cite{CMWW:2012}, Li, Monk \& Weile\cite{LiMoWe:2015},   Ballani, Banjai, Sauter \& Veit. \cite{BaBaSaVe:2013}, Banjai, Lubich \& Sayas, \cite{BaLuSa:2015},  Nick, Kov\'{a}cs \& Lubich\cite{NiKoLu:2022},  Banjai and Sayas\cite{BaSa:2022}). We believe that the chapter in the  {\em Encyclopedia of Computational Mechanics}  by  Costabel \& Sayas \cite{CoSa:2017} may serve as an excellent  introduction to the {\em time-domain scattering}  for new comers. For more advanced topics, interested readers can refer to Martin's recent monograph {\em Time-Domain  Scattering } \cite{Ma:2021}, which contains 910 references, and Sayas' book \textit{Retarded potentials and time domain boundary integral equations: a road-
map} \cite{Sa:2016errata,Sa:2016}.

The term {\em boundary-field formulation}, which is a variant of  {\em boundary-field equation method}, dates back to 1995 in the Research Notes \cite{GaHs:1995} by Gatica \& Hsiao for treating a class of elliptic linear and nonlinear problems in continuum  mechanics. The formulation  has been applied to space-frequency domain fluid structure interaction problems \cite {HsKlSc:1989}, and  to the space-frequency domain acoustic and electromagnetic scattering \cite{Hs:2000}.  In 2015, this boundary-field formulation was applied to the space-time domain fluid structure interaction problem by  Hsiao, Sayas \& Weinacht \cite{HsSaWe:2015}. The essential process of the formulation is as follows: first convert the space-time domain problem into a space-Laplace domain interaction problem; next, apply the boundary-field formulation to the resulting problem; then obtain appropriate estimates of the solutions in terms of the Laplace domain variable $s$, and finally obtain estimates of the corresponding solution in the space-time domain using a result due to Dominguez \& Sayas \cite{DoSa:2013} (improved and updated in \cite{Sa:2016errata,Sa:2016}). These techniques have been applied to a variety of time-domain problems in \cite{BrSaSa2018,HsSa:2021a,HsSaSa:2016, HsSaSaWe:2018, HsSa:2021b, HsWe:2021b, SaSa2016} including the recent  scattering problem in elastodynamics  by Hsiao, S\'anchez-Vizuet  \& Wendland \cite{HsSaWe:2022}.
 
The paper is organized as follows: The time-domain scattering problem is formulated in Section \ref{sec:2}. Section \ref{sec:3} is devoted to preliminaries  and notation including a brief refresher of trace spaces for the solution of Maxwell's equations, the space of causal tempered  distributions for the Maxwell problem and their Laplace transform. Section \ref{sec:4} lays out the building blocks for the transformation of the Laplace domain system into a boundary-field formulation, which is then proposed and analyzed in Section \ref{sec:5}. The core of the analysis is contained in Section \ref{sec:6} which includes existence, uniqueness and stability results. These are achived via the introduction of an equivalent transmission problem. The stability estimates in the Laplace domain summarized in Theorem \ref{thm:6.1} and Corollaries \ref{cor:cor6.1} and \ref{cor:cor6.2} are then transferred into time domain estimates in Section \ref{sec:7}. Finally, in Section \ref{sec:8} we introduce the setting of a coated conducting scatterer and briefly show how all the results from the previous sections can be applied verbatim to this more general situation.
%
\section{The scattering problem}\label{sec:2}
%
\subsection{Formulation of the problem} 
Let $\Omega_-\subset  \mathbb R^3$ be a bounded domain representing an electrically neutral dielectric obstacle, the scatterer, which is assumed to have a Lipschitz  boundary $ \Gamma: = \partial \Omega_ - $ with an 
outward unit normal ${\bf n}$ as shown in Figure \ref{fig:fig1}. 
\begin{figure}[tb] \centering 
\includegraphics[width=0.4\linewidth]{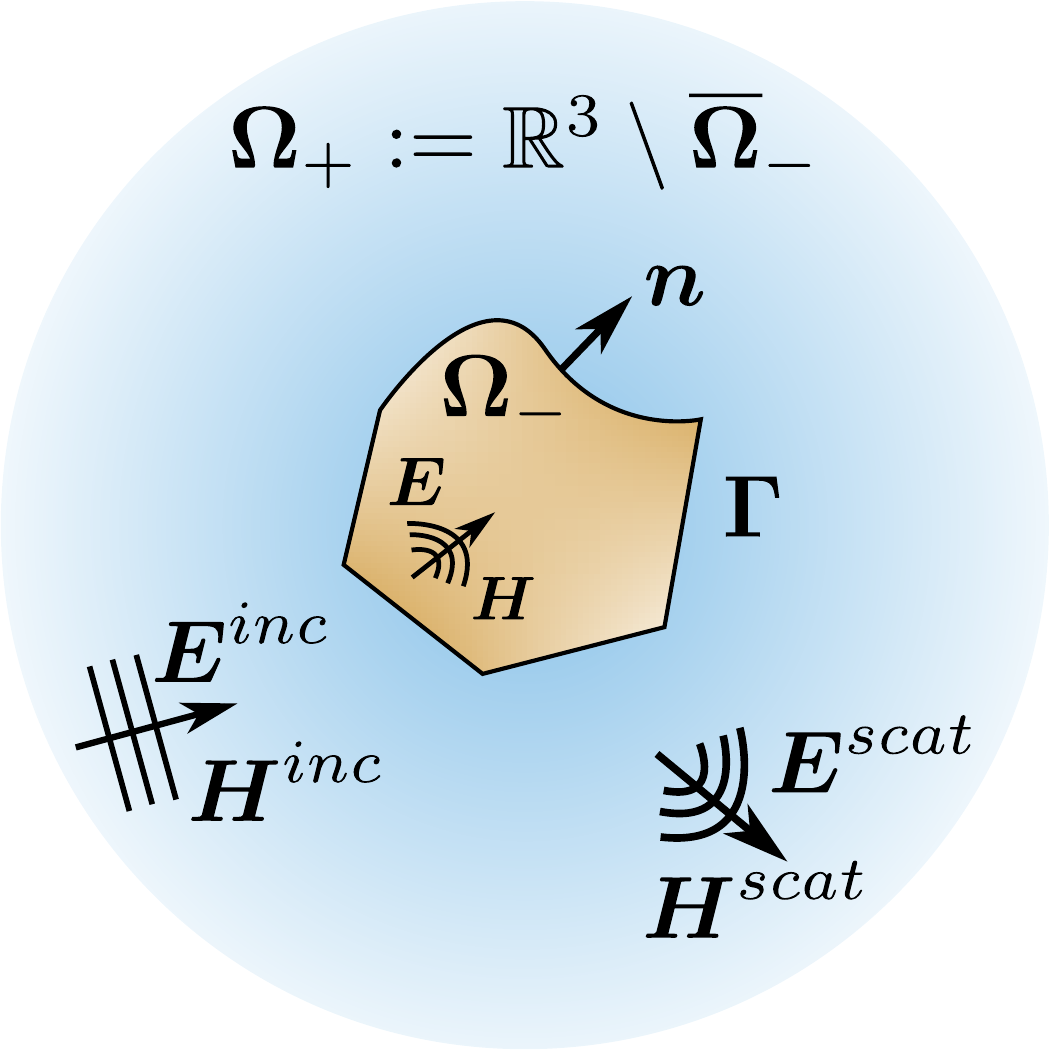}
\caption{  A schematic depiction of the problem geometry. }\label{fig:fig1} 
\end{figure} 
We begin the formulation of the scattering problem with the governing equations inside the dielectric obstacle $\Omega_-$ consisting  of the time-dependent system of Maxwell's equations
 \begin{equation}
\left.
\begin{array}{rclcrcl}
{\bf curl} \, {\bf E} & = & -\partial_t\,{ \bf B} , & \quad & {\bf curl}\,{\bf H} & = & \partial_t\,{\bf D} + {\bf J},
\\ 
{\rm div}\,  {\bf D} & = & 0, & \quad & {\rm div}  \,{\bf B} & = & 0  
\end{array}
\right\} \quad \mbox{in} \quad \Omega_- \times (0, T)  \label{eq:h2.1}
\end{equation}
for the electric and magnetic fields $\bf E$ and $\bf H$, respectively. The vector valued function ${\bf J} $ describes an external applied current density which satisfies the compatibility condition
\[
{\rm div} {\bf J} = 0 \quad \mbox{in} \quad   \Omega_- \times (0, T).
\]
The vectors ${\bf D}$ and ${\bf B}$ are referred to as the electric displacement and magnetic induction, respectively. They are related to the electric and magnetic fields by the standard constitutive equations for linear media 
\[
{\bf D} = \varepsilon {\bf E}, \quad \text{ and } \quad {\bf B} = \mu {\bf H} ,
\]
where $\varepsilon $ is the electric permittivity and $\mu$  is the magnetic  permeability of the medium. For simplicity, we assume that  both of them are positive constants. \color{black} However, as long as the product $\epsilon\mu\in L^\infty(\Omega_-)$ and there exists some $k_0>0$ such that $\epsilon\mu\geq k_0$, the analysis technique and the results will remain valid. \color{black} When the medium is vacuum, it is custumary to denote these constants by $\epsilon_0$ and $\mu_0$. We consider the free space exterior to the scatterer as vacuum and denote it by $\Omega_+ := \mathbb{R}^3 \setminus \overline{\Omega}_-$. Here, as usual, we decompose the 
total electric ${\bf E}$ and magnetic ${\bf H}$  fields in the form:
\[
{\bf E} := {\bf E}^{scat} + {\bf E}^{inc} , \quad  {\bf H} := {\bf H}^{scat} + {\bf H}^{inc}
\]
for given incident fields $ {\bf E}^{inc}$ and $ {\bf H}^{inc} $. The scattered fields ${\bf E}^{scat} $ and ${\bf H}^{scat}$
are required to satisfy the  corresponding  time-dependent Maxwell's equations (see, for instance, Hsiao and Wendland \cite{HsWe:2021}, and Monk \cite{Monk:2003}):
\begin{equation}
\left.
\begin{aligned}
{\bf curl} \, {\bf E}^{scat} + \mu_0 \, \partial_t\,{\bf H}^{scat} & = \boldsymbol 0 \\
 {\bf curl}\,{\bf H}^{scat} - \varepsilon_0\, \partial_t\,{\bf E}^{scat} & = \boldsymbol 0 
\end{aligned}
\right\} \quad \mbox{in} \quad \Omega_+ \times (0, T). \label{eq:h2.2}
\end{equation} 
Together with \eqref{eq:h2.1} and \eqref{eq:h2.2} we require the electric and magnetic fields to satisfy the transmission and initial conditions
\begin{equation}
\left.
\begin{aligned}
{ \bf n} \times ({ \bf E} - {\bf E}^{scat})\times {\bf n} & = {\bf n} \times {\bf E}^{inc}\times {\bf n} \\
{\bf n} \times ({ \bf H} - {\bf H}^{scat})\times {\bf n} & =  {\bf n} \times 
 {\bf H}^{inc}\times {\bf n}
\end{aligned}
  \right\} \quad  \mbox{on} \quad \Gamma \times (0,  T) \label{eq:h2.3}
  \end{equation}
 and
 \begin{equation}
  \label{eq:h2.4}
  {\bf E}(x, 0) =  {\bf 0}  \quad{\rm in}\quad   \Omega_-  \quad \mbox{and} \quad  
{\bf E}^{scat}(x, 0)  =  {\bf 0} \quad  \mbox{in} \quad \Omega_+.
\end{equation} 
 We suppose that ${\bf E}^{inc}$ and ${\bf H}^{inc}$ are given incident fields satisfying  \eqref{eq:h2.1} and \eqref{eq:h2.2} in free space. Across the interface $\Gamma$ the tangent components of both ${\bf E}$  and ${\bf H}$ must be continuous. In addition, we assume homogeneous initial conditions for both ${\bf E} $ and ${\bf H} $, and  require that the given incident fields ${\bf E}^{inc} $  and ${\bf H}^{inc}$ are causal in the sense that they vanish identically  for $ t <  0$.
%
\subsection{Governing equations in the time domain} 
%
It is a standard procedure to eliminate one variable in \eqref{eq:h2.1} and \eqref{eq:h2.2} and express the problem in terms of a second order system. Eliminating the fields ${\bf H}$ and ${\bf H}^{scat}$ the time-dependent scattering problem defined by \eqref{eq:h2.1},\eqref{eq:h2.2}, \eqref{eq:h2.3} and \eqref{eq:h2.4} can be simplified into: Given a divergence-free vector field ${\bf J}$ and an incident field ${\bf E}^{{inc}}$, find  vector fields ${\bf E} $ and ${\bf E}^{scat}$ such that
 \begin{equation}
 \label{eq:h2.5}
\left.\begin{array}{rclcr}
{\bf curl} \,{\bf curl}\,  {\bf E} + c^{-2} \, \partial^2_{tt}\,{\bf E}  & = & - \mu \partial_t {\bf J} & \quad & \text{ in } \quad \Omega_- \times (0, T) \\
 {\bf curl}\, {\bf curl}\, {\bf E}^{scat} + c_0^{-2} \, \partial^2_{tt} \,{\bf E}^{scat} & = &  {\bf 0} & \quad & \text{in } \quad \Omega_+ \times (0, T) 
\end{array}\right\}.
\end{equation} 
Where the constants $c: = 1/ \sqrt{\mu \varepsilon} $ and $ c_0 := 1/ \sqrt{ \mu_0 \varepsilon_0}$ represent the speed of light in the dielectric and in vacuum respectively. These equations are supplemented with the transmission conditions
\begin{equation}
\label{eq:h2.6}
\left.
\begin{array}{rcl}
{ \bf n} \times ({ \bf E} - {\bf E}^{scat})\times {\bf n}  & = & {\bf n} \times {\bf E}^{inc} \times {\bf n} \\
{\bf n} \times { \bf curl} \, ( \mu^{-1}{\bf E} - \mu_0^{-1}{\bf E}^{scat})  & = &  {\bf n} \times 
\mu_0^{-1}{\bf curl }\, {\bf E}^{inc}
\end{array}\, 
  \right\} \quad \mbox{on} \quad \Gamma \times (0, T),  
\end{equation}
and initial conditions
\begin{alignat}{8}
\nonumber
{\bf E}(x, 0) =&\, {\bf 0},  &\quad &  \partial_t {\bf E} (x, 0)  &=\, {\bf 0}  & \quad &  \text{ in }\, \Omega_-\,, \\
\label{eq:h2.7}
{\bf E}^{scat}(x, 0) =&\,  {\bf 0},  &\quad &  \partial_t {\bf E}^{scat}(x, 0) &=\, {\bf 0} & \quad &  \text{ in }\, \Omega_+\,.
\end{alignat}
In the above formulation,  the transmission conditions \eqref{eq:h2.3} have been replaced by \eqref{eq:h2.6}. Moreover, the restrictions of the functions to $\Gamma$ should be understood in the sense of traces. This  will be treated with more detail in Section \ref{sec:3}, where we introduce the traces of the solutions of Maxwell's equations.
%
\section{Preliminaries and notation}\label{sec:3} 
%
Before presenting the boundary-field formulation of the Laplace-transformed scattering problem, we must first briefly introduce some notation and properties of the solution spaces of Maxwell's equations. The full development of this theory is relatively recent and quite technical. Therefore we present only the basics for completeness and refer the reader to the seminal articles \cite{BuCoSc:2002, BuCoSh:2002} and to the monographs \cite{HsWe:2021,Monk:2003,Ne:2001} for the full details. Throughout this communication we will make use of standard results and notation on Sobolev space theory, the basics of which can be found in the classic text \cite{AdFo:2003}.  
 
\subsection{Sobolev spaces for the curl}

Let $\mathcal{O}$ be a bounded Lipschitz domain in $\mathbb{R}^3$ with boundary $\Gamma:=\partial \mathcal{O}$. The notation
\[
({\bf u},{\bf v})_{\mathcal O} := \int_{\mathcal O} {\bf u}\cdot{\bf v} \quad \text{ and } \quad \langle{\bf u},{\bf v}\rangle_\Gamma := \int_\Gamma {\bf u}\cdot{\bf v},
\]
will be used to denote the inner product between the vector-valued functions ${\bf u}$ and ${\bf v}$ defined in the domain and on its boundary, respectively. The function space 
 \[
 { \bf H} (\mathrm{{\bf curl}}, \mathcal{O}) :=  \{ {\bf u \in L^2} (\mathcal{O}) := \mathrm L^2( \mathcal{O})^3: \mathrm{{\bf curl}}\,\mathbf u \in {\bf L}^2 (\mathcal{O}) \}
 \]
endowed with the inner product
$
(\mathbf u,\mathbf v)_{{\bf curl},\mathcal O} := \int_{\mathcal O} (\mathbf u \cdot \mathbf v + {\bf curl\, u \cdot curl\, v})
$
becomes a Hilbert space with the induced norm 
 \[
 \|{\bf u} \|^2 _{\mathrm{{\bf curl}},\; \mathcal{O}} := (\mathbf u,\overline {\mathbf u})_{{\bf curl},\mathcal O} = \int_{\mathcal O} (\mathbf u \cdot \overline {\mathbf u} + {\bf curl\, u \cdot curl\, \overline {u}}) = \| {\bf u}\|^2_{L^2(\mathcal{O})}+ \| \mathrm{{\bf curl}}\,{\bf  u} \|^2_{L^2(\mathcal{O})} .
 \]
 Above, $\overline{\bf u}$ is used to denote the complex conjugate of the function ${\bf u}$.
  
We recall that, in Sobolev space theory, the operator mapping a function defined over a domain $\mathcal O$ to its restriction to the boundary $\Gamma$ (whenever this operation is well defined) is usually denoted by $\gamma$ and referred to as the \textit{trace operator}. When applied to a vector-valued function, the trace should be understood as acting componentwise. Moreover, if the domain $\mathcal O$ is bounded (as will be the case in our application) the notation $\gamma^{-}$ will be used to denote the trace in the interior domain $\mathcal O_-:=\mathcal O$, while $\gamma^+$ will denote the trace in the exterior domain $\mathcal O_+ := \mathbb R^3\setminus \overline{\mathcal O}$. This superscript convention will extend to the tangential traces that will be defined later. Similarly, for a vector valued function $\mathbf u$ defined over $\mathbb R^3\setminus \Gamma$ and its external and internal traces (or tangential traces) we will use the standard notation
\[
\jump{\gamma \mathbf u} : = \gamma^-\mathbf u - \gamma^+\mathbf u \qquad \text{ and } \qquad \ave{\gamma \mathbf u} : = \frac{1}{2}\left(\gamma^-\mathbf u + \gamma^+\mathbf u \right)
\]
to denote, respectively, its \textit{jump}  and \textit{average} across the interface $\Gamma$.

For a vector-valued function ${\bf u}$ defined on the surface $\Gamma$, the operators $ {\rm div}_{\Gamma}$ and ${\rm curl}_{\Gamma} $ will denote the surface divergence and surface scalar curl defined respectively by
\[
{\rm div}_{\Gamma} \, {\bf u} := \gamma({\rm div} \, {\bf \tilde{u}})\quad \text{ and } \quad {\rm curl}_{\Gamma} \ {\bf u} := {\bf n} \cdot \gamma({\bf curl \, \tilde{u}}),
\]
where ${\bf \tilde{u}} $ is an extension of ${\bf u}$ defined on the tubular neighborhood of $\Gamma$ in $\mathcal{O}$ and such that $\gamma {\bf \tilde u}={\bf u}$. 
 
In the case of functions ${\bf u \in H} (\mathrm{{\bf curl}}, \mathcal{O})$, it is possible to define two boundary restriction operators or traces. The \textit{tangential trace operator} $\gamma_t$, mapping a vector field into its component tangential to $\Gamma$; and the \textit{tangential projection operator} $\pi_t$ mapping a vector field to the projection of its componentwise trace into the tangent bundle to $\Gamma$ \cite{HsWe:2021,Ne:2001}. These two restrictions are not equal to each other (for instance, their orientation in the tangent bundle differs by ninety degrees) and are defined by
 \[
\gamma_t\mathbf u : = \boldsymbol n \times \gamma\mathbf u \qquad \text{ and } \qquad \pi_t \mathbf u := \boldsymbol n \times (\gamma{\bf u} \times \boldsymbol n) = \gamma\mathbf u - (\gamma\mathbf u\cdot \boldsymbol n)\boldsymbol n.
\]
Above, the componentwise trace of $\mathbf u$ is well defined since the space $\mathbf H^1(\mathcal{O}):=\left(\mathrm H^1(\mathcal{O})\right)^3$ is dense in  $\mathbf H(\mathrm{{\bf curl}}, \mathcal{O})$. In the paramount article \cite{BuCoSh:2002} it was shown that, for Lipschitz domains, these operators are bounded, linear and surjective over the function spaces 
 \[
 \gamma_t : { \bf H} (\mathrm{{\bf curl}}, \mathcal{O}) \rightarrow {\bf H}_{\|}^{-1/2}(\mathrm{div}_{\Gamma} , \Gamma) 
 \quad \mbox{and}\quad
 \pi_t : 
  { \bf H} (\mathrm{ {\bf curl}}, \mathcal{O}) \rightarrow {\bf H}_{\perp}^{-1/2}(\mathrm{curl}_{\Gamma}, \Gamma). 
\]
The definition of the tangential trace spaces ${\bf H}_{\|}^{-1/2}(\mathrm{div}_{\Gamma} , \Gamma) $ and ${\bf H}_{\perp}^{-1/2}(\mathrm{curl}_{\Gamma},\Gamma)$ is elaborate and we shall omit the details here. Let us simply say that these spaces are respectively endowed with the graph norms $ \| \cdot \|^2_{-1/2, \mathrm{{\rm div}_{\Gamma}}, \Gamma}$ and $ \| \cdot \|^2_{-1/2, \mathrm{{\rm curl}_{\Gamma}}, \Gamma}$, and that the space  ${\bf H}_{\|}^{-1/2}(\mathrm{div}_{\Gamma} , \Gamma)$ is naturally identified with the dual space of ${\bf H}_{\perp}^{-1/2}(\mathrm{curl}_{\Gamma},\Gamma)$,  when we use
\[
{\bf L}^2_t(\Gamma):= \left\{{\bf u} \in {\bf L}^2(\Gamma): {\bf u}\cdot\boldsymbol n = 0 \; \text{a.e. on }\Gamma\right\}
\]
as ``pivot" space.  The reader is referred to \cite[\S 4.5]{HsWe:2021} and \cite{BuCoSh:2002} for a detailed discussion.

As a consequence of the surjectivity of the tangential traces, the pseudo-inverses 
\[
\gamma_t^\dagger:\mathbf H^{-1/2}_{\|}({\rm div}_\Gamma,\Gamma) \to \mathbf H({\bf curl},\mathcal O) \qquad \text{ and } \qquad \pi_t^\dagger: \mathbf H^{-1/2}_{\perp}({\rm curl}_\Gamma,\Gamma) \to \mathbf H({\bf curl},\mathcal O),
\]
of these mappings are well defined and bounded. We shall refer to these pseudo-inverses as \textit{liftings}. Using the liftings and the definitions of the tangential traces it is easy to see that
\[
 \gamma_t {\bf u} = \pi_t (\gamma_t {\bf u}) = \gamma_t ( \pi_t {\bf u} ),\quad \text{ and } \quad  \pi_t (\pi_t {\bf u}) = \pi_t {\bf u}
\]
for any  $\mathbf u \in \mathbf H(\mathrm{{\bf curl}}, \mathcal{O})$, where the successive application of two trace operators should be understood as being mediated by the corresponding lifting, which has been omitted for simplicity. We shall use the identities above---including the implicit use of liftings---liberally throughout this work.

Let $\mathcal O_-:=\mathcal O$ and $\mathcal O_+:= \mathbb R^3\setminus\overline{\mathcal O}$ denote respectively the bounded and unbounded components of $\mathbb R^3$ separated by $\Gamma:=\partial\mathcal O$.

 In the sequel we will make heavy use of the following integration by parts formulas, or Green's formulas, which hold for ${\bf u},\,{\bf v}\in \mathbf H({\bf curl}, \mathcal O_-) $ and  ${\bf u},\,{\bf v}\in \mathbf H({\bf curl}, \mathcal O_+) $ respectively:
\begin{subequations}
\label{eq:integrationbyparts}
\begin{align}
\label{eq:ibpInt}
\langle\gamma_t^-{\bf u},\pi_t^-{\bf v}\rangle_\Gamma &= ({\bf curl\, u},{\bf v})_{\mathcal O_-}  - ({\bf u},{\bf curl\, v})_{\mathcal O_-}\\
\label{eq:ibpExt}
\langle\gamma_t^+{\bf u},\pi_t^+{\bf v}\rangle_\Gamma &= ({\bf u},{\bf curl\, v})_{\mathcal O_+} - ({\bf curl\, u},{\bf v})_{\mathcal O_+}.
\end{align}
\end{subequations}
The validity of these formulas in $\rm H({\bf curl}, \Omega)$ when $\Omega$ has only Lipschitz regularity (proved in \cite{BuCoSh:2002}) is highly non-trivial, and is one of the main results of the theory developped by Annalisa Buffa and her collaborators in the early 2000's.
%
\subsection{Causal tempered distributions for Maxwell's problem}
In view of the structure of equations \eqref{eq:h2.5}, we start by defining the somewhat unusual space
\[
\mathbf H({\bf curl}^2, \mathbb R^3\setminus\Gamma) := \left\{ \mathbf u \in \mathbf H({\bf curl}, \mathbb R^3\setminus\Gamma): {\bf curl\,curl\,u}\in{\bf L}^2(\mathbb R^3\setminus\Gamma) \right\},
\] 
which becomes a Hilbert space when endowed with the norm
\[
\|{\bf u}\|_{\bf curl\,curl}^2 := \|{\bf u}\|_{\mathbb R^3\setminus\Gamma}^2 + \|{\bf curl\, u}\|_{\mathbb R^3\setminus\Gamma}^2 + \|{\bf curl\,curl\, u}\|_{\mathbb R^3\setminus\Gamma}^2.
\]
In the definition above, the subscript $\mathbb R^3\setminus\Gamma$ has been used to emphasize that both ${\bf curl\, u}$ and ${\bf curl\,curl\, u}$ are meant in the sense of distributions over $\mathbb R^3\setminus\Gamma$. 

If we denote by $\mathcal P$ the space of polynomials, the Schwartz class is defined as
\[
\mathcal S(\mathbb R) :=\{\varphi \in \mathcal C^\infty(\mathbb R): p\varphi^{(k)} \in L^\infty(\mathbb R) \,\forall k\geq 0, \forall p\in\mathcal P\}.
\]
A continuous linear map $T: \mathcal S(\mathbb R)\to \mathbf H({\bf curl}^2, \mathbb R^3\setminus\Gamma) $ such that $T(\varphi)=\boldsymbol 0$ for all $\varphi\in \mathcal S(\mathbb R)$ with $\text{ supp }\varphi\subset (-\infty,0)$ is said to be a \textit{causal tempered distribution with values in } $\mathbf H({\bf curl}^2, \mathbb R^3\setminus\Gamma)$ and is denoted as $T\in \text{CT}(\mathbf H({\bf curl}^2, \mathbb R^3\setminus\Gamma))$. 

It then makes sense to consider distributional solutions of Maxwell's equations understood as searching for ${\bf E}$ and ${\bf E}^{scat}$ in $\text{CT}(\mathbf H({\bf curl}^2, \mathbb R^3\setminus\Gamma))$ such that equations \eqref{eq:h2.5} and  \eqref{eq:h2.6} hold for given causal tempered distributions ${\bf J}$ and ${\bf E}^{inc}$. Moreover, the causal initial conditions  \eqref{eq:h2.7}, as we shall see next, will also allow us to consider the distributional Laplace transform of the system.
\subsection{Laplace transform}
%
In order to apply the boundary-field equation method to the problem at hand,  we will consider its formulation in the Laplace domain. Throughout the paper we will denote the  complex plane by $\mathbb{C}$; we will refer to the set of complex numbers with positive real part as \textit{the positive half-plane} and will denote it by $ \mathbb{C} + := \{ s \in  \mathbb{C}  : \mathrm{Re}\, s > 0 \}$. We will also repeatedly make use of the notation
\[
\sigma : = \mathrm{Re}(s) \quad\text{ and }\quad \underline{\sigma}: = \min\{1,\sigma\}.
\]
For any causal complex-valued function with limited growth at infinity $ f : [0,\infty) \rightarrow  \mathbb{C}$ , its Laplace transform is given by
\[
\mathcal L \{f\} (s) :=  \int_0^{\infty} e^{-st} f(t)\, dt,  
\]
whenever the integral converges. If $X$ is a Banach space and $f\in {\rm CT}(X)$, the Laplace transform of $f$ can be defined by duality as the analytic function
\[
\mathcal L \{f\} := f\left(\text{exp}(-s\cdot)\right) \quad \text{ for } s\in\mathbb C_+.
\]
Hence, for a causal tempered distribution ${\mathbf u}\in \text{CT}(\mathbf H({\bf curl}^2, \mathbb R^3\setminus\Gamma))$, the Laplace transform is well defined. The reader is referred to \cite[Chapter 2]{Sa:2016} for further details regarding distributional Laplace transforms.

\subsection{Notational remarks}
Since most of the analysis that follows will be carried out in the Laplace domain, and to keep the notation as simple as possible we will drop the explicit use of ``\,$\mathcal L \{\cdot\}$\," to denote the Laplace transform. We will assume implicitly that all the variables are the Laplace transforms of a corresponding function in the physical space that is denoted with the same symbol. When the time comes to translate our results into the physical domain, we will indicate so explicitly.

In a similar vein, to avoid the proliferation of superfluous constants in our estimates, we will make use of the symbols $a \gtrsim b$ and $a \lesssim  b$  to denote the existence of a generic positive constant $C$ \textit{independent of the Laplace parameter s} or of any relevant physical parameters such that  $a \geq C  b $ and  $a \leq C  b$ respectively.
 
%
\section{Governing equations in the Laplace domain}\label{sec:4}
%
Having introduced the Laplace transform for causal tempered distributions, we can now consider the Laplace domain version of the distributional system \eqref{eq:h2.5} through \eqref{eq:h2.7}, which becomes:
\begin{subequations}
\label{eq:h2.8}
\begin{alignat}{6}
\label{eq:h2.8a}
{\bf curl}\,{\bf curl}\, {\bf E} + (s/c)^2\, {\bf E}  &= - \mu \, s {\bf J}  &\quad& \text{ in } \Omega_-\,,\\
\label{eq:h2.8b} 
{\bf curl}\,{\bf curl}\, {\bf E}^{scat} + (s/c_0)^2\, {\bf E}^{scat}  &=  {\bf 0}  & \quad & \text{ in } \Omega_+\,,\\
\label{eq:h2.8c}
{ \bf n} \times ({\bf E} - {\bf E}^{scat}) \times {\bf n}  & = {\bf n} \times {\bf E}^{inc} \times {\bf n} & \quad & \text{ on } \Gamma, \\
\label{eq:h2.8d}
 {\bf n} \times { \bf curl}\,(\mu^{-1}{\bf E} - \mu_0^{-1}{\bf E }^{scat}) &   = {\bf n} \times \mu_0^{-1}{\bf curl } \, {\bf E}^{inc} & \quad & \text{ on } \Gamma.
\end{alignat}
\end{subequations}
In this section, we will transform this Laplace-domain system of PDEs into a non-local problem consisting of boundary integral equations for the scattered field in $\Omega_+$ and a variational formulation for the transmitted field in $\Omega_-$.
%
\subsection{Operator equations in $\Omega_-$}
%
For a complex number $s$ and an open domain $\mathcal O$ with Lipschitz boundary, we define the parameter-dependent bilinear form $a(\cdot,\cdot;\cdot)_{\mathcal O}: {\bf H}({\bf curl},\mathcal O)\times {\bf H}({\bf curl},\mathcal O)\times \mathbb C \to \mathbb C$ by
\begin{equation}
\label{eq:h3.2}  
a ({\bf u},  {\bf v}; s)_{\mathcal O} : =  ({\bf curl}\, {\bf u}, {\bf curl\, v})_{\mathcal O} + s^2 ({\bf u}, {\bf v})_{\mathcal 
O},  \quad 
\forall \quad {\bf u}, 
{\bf v} \in {\bf H}({\bf curl}, \mathcal O).  
\end{equation}
Moreover, for ${\bf u}  \in\,  {\bf H}({\bf curl}, \mathcal O)$ we define its associated energy norm
\begin{equation}
\label{eq:energynorm}
\triple{{\bf u }}^2_{|s|, {\bf curl}, \mathcal O} := \|{\bf curl\, u} \|^2_{\mathcal O}  + \| s {\bf u} \|^2_{\mathcal O},
\end{equation}
which is easily seen to be a norm and to satisfy the equivalence relations
\begin{equation}
\label{eq:energynormP1}
\underline{\sigma}\, \triple{{\bf u }}_{|1|, {\bf curl}, \mathcal O}  \leq \triple{{\bf u} }_{|s|, {\bf curl}, \mathcal O} \leq \frac{|s|}{\underline{\sigma}} \triple{{\bf u }}_{|1|, {\bf curl}, \mathcal O}.
\end{equation}

Hence, without loss of generality, for $s =1 $ we will assume that the norm  $\triple{\cdot}_{|1|, {\bf curl}, \mathcal O} $ is equivalent to the norm $ \| \cdot \|_{{\bf curl}, \mathcal O} $ of ${\bf H}({\bf curl}, \mathcal O) $. It is also easy to prove that
\begin{align}
\label{eq:energynormP2}
|a( {\bf u} , {\bf v};  s )_{\mathcal O} | & \leq \triple{{\bf u }}_{|s|, {\bf curl}, \mathcal O}
\triple{{\bf v }}_{|s|, {\bf curl}, \mathcal O},\\
\label{eq:energynormP3}
\mathrm{Re} \Big(\bar{s} \, a( {\bf u} , \overline{\bf{u}}; s )_{\mathcal O}  \Big) & = \sigma \triple{{\bf u} }^2_{|s|, {\bf curl}, \mathcal O}.
\end{align}
These continuity and coercivity bounds will be key in proving the well posedness of the problem at hand.

For ${\bf E}  \in\,  {\bf H}({\bf curl}, \Omega_-)$, multiplying \eqref{eq:h2.8a} by the test function ${\bf F} \in 
{\bf H}( {\bf curl}, \Omega_-)$ and applying Green's formula \eqref{eq:ibpInt} we obtain the variational form of the equation \eqref{eq:h2.8a}:
\begin{equation}
\label{eq:h3.1}
a ({\bf E},  {\bf F}; s/c)_{\Omega_-}  + \langle \gamma_t^- {\bf curl\, E}, \pi_t^-{ \bf F }\rangle_{\Gamma} =   -s \mu \, ( {\bf J}, {\bf F })_{\Omega_-},  
\end{equation}
which can also be written in operator form as
\begin{equation} 
\label{eq:h3.3}
  {\bf A}_{\Omega_-} (s/c) {\bf E} + \left(\pi_t^-\right)^\prime\gamma_t^-{\bf curl\, E}= - s \mu \, {\bf J}\quad \mbox{in} \quad  {\bf H}({\bf curl}, \Omega_- )^{\prime},
  \end{equation}
where, for an open domain $\mathcal O$, the operator ${\bf A}_{\mathcal O}$ mapping ${\bf H}({\bf curl},\mathcal O)$ into its dual space ${\bf H}^\prime({\bf curl},\mathcal O)$ has been defined as
\[
  \langle{\bf A}_{\mathcal O} (s) {\bf E},{\bf F}\rangle  := a ({\bf E},  { \bf F }; s)_{\mathcal O} \quad \text{ for } {\bf F} \in {\bf H}({\bf curl},\mathcal O).
\]
%
\subsection{Electromagnetic layer potentials and boundary integral operators}
We now recall the definitions of the electromagnetic layer potentials and the boundary integral operators as well as the properties that will be used to reduce the exterior problem into a non-local problem over $\Gamma$. For $\mathbf m\in {\bf H}^{-1/2}_{\perp} ( {\rm curl}_{\Gamma}, \Gamma)$ and ${\bf j} \in {\bf H}^{-1/2}_{||} ({\rm div}_{\Gamma}, \Gamma )$, we define the electromagnetic layer potentials
\begin{alignat*}{6}
\nonumber
\mathcal D(s){\bf j} :=\,&{\bf curl}\!\! \int_\Gamma \!\!G\left(x,y;s\right){\bf j}(y)dy\,, &\qquad&& \mathcal S(s){\bf j}:=\,& {\bf curl\,curl}\!\!\int_\Gamma\!\! G\left(x,y;s\right){\bf j}(y)dy,\\
\widetilde{\mathcal D}(s)\mathbf m :=\,& \mathcal D(s)\circ\gamma_t\circ\pi_t^\dagger\,\mathbf m\,, &\qquad&& \widetilde{\mathcal S}(s)\mathbf m :=\,& \mathcal S(s)\circ\gamma_t\circ\pi_t^\dagger\mathbf m\,,
\end{alignat*} 
and the boundary integral operators
\begin{alignat*}{6}
\nonumber
\mathcal K(s) \mathbf j :=\,& \gamma_t{\bf curl} \int_\Gamma G(x,y;s){\bf j}(y)dy, & \qquad && \mathcal V(s) {\bf j} :=\,& \pi_t{\bf curl\,curl}\int_\Gamma G(x,y;s){\bf j}(y)dy, \\
 \widetilde{\mathcal K}(s) \mathbf m :=&\, \mathcal K(s)\circ\gamma_t\circ\pi_t^\dagger{\bf m},  & \qquad && \widetilde{\mathcal V}(s){\bf m}:=\,&\mathcal V(s)\circ\gamma_t\circ\pi_t^\dagger{\bf m}.
\end{alignat*}
The traces of the electromagnetic layer potentials satisfy
\begin{equation}
\label{eq:LPtraces}
\gamma_t^{\pm}\mathcal D(s) = \pm\frac{1}{2} + \mathcal K(s)
\quad \text{ and } \quad  
 \pi_t^{\pm}\widetilde{\mathcal D}(s) = \pm\frac{1}{2} + \widetilde{\mathcal K}(s),
\end{equation}
which in turn imply the well known average conditions
\begin{alignat*}{6}
\nonumber
\ave{\gamma_t\mathcal D(s){\bf j}} =\,& \mathcal K(s){\bf j}\,, & \qquad \qquad \qquad && \ave{\pi_t \mathcal S(s){\bf j}} =\,& \mathcal V(s){\bf j}\,, \\
\ave{\pi_t\widetilde{\mathcal D}(s){\bf m}} =\,& \widetilde{\mathcal K}(s){\bf m}\,, & \qquad \qquad \qquad && \ave{\gamma_t\widetilde{\mathcal S}(s){\bf m}} =\,& \widetilde{\mathcal V}(s){\bf m},
\end{alignat*} 
and the jump conditions
\[
\jump{\gamma_t\mathcal D(s){\bf j}\,} = -{\bf j}\,, \quad \jump{\pi_t\widetilde{\mathcal D}(s){\bf m}} = -{\bf m}\,, \quad \text{ and } \quad \jump{\pi_t \mathcal S(s){\bf j}\,} =\jump{\gamma_t\widetilde{\mathcal S}(s){\bf m}} = {\bf 0}.
\]
%
\subsection{Reduction to BIEs in $\Omega_+$}
In order to transform the partial differential equation \eqref{eq:h2.8b} into a BIE on the boundary of $\Omega_+$, we must first find the representation of the solution of \eqref{eq:h2.8b} in terms of Cauchy data 
\[
{\bf m} \in {\bf H}^{-1/2}_{\perp} ( {\rm curl}_{\Gamma}, \Gamma)    \quad \mbox{and}\quad {\bf j} \in {\bf H}^{-1/2}_{||} ({\rm div}_{\Gamma}, \Gamma ).
\]
For $x\in\Omega_+$, we propose an ansatz of the form
\begin{equation}
\label{eq:ansatz}
{\bf E}^{scat}(x) = {\bf curl} \int_\Gamma G\left(x,y;s/c_0\right)\gamma_t{\bf m}(y)dy - \frac{1}{s\epsilon_0}\,{\bf curl\,curl}\int_\Gamma G\left(x,y;s/c_0\right){\bf j}(y)dy,
\end{equation}
where   
\[
G(x,y;t) : =  \frac{1} { 4 \pi |x-y|}\, e^{-t|x-y|} 
\]
 is the  fundamental solution of the Yukawa potential  equation in $\mathbb{R}^3$, and
 $c_0 := (\mu_0 \, \varepsilon_0)^{-1/2} $ is  the speed  of light in the vacuum \cite{So:1964}. As we shall now demonstrate, the function defined by \eqref{eq:ansatz} is indeed a solution of \eqref{eq:h2.8b}. From the representation above, we have
 {\small \begin{align}
 \nonumber
 {\bf curl\,E}^{scat}(x)\! =\,& {\bf curl\,curl}\!\! \int_\Gamma\! G\left(x,y;s/c_0\right)\gamma_t{\bf m}(y)dy - \frac{1}{s\epsilon_0}\,{\bf curl\,curl\,curl}\!\!\int_\Gamma G\!\left(x,y;s/c_0\right){\bf j}(y)dy\\
 \nonumber
=\,& {\bf curl\,curl} \int_\Gamma G\left(x,y;s/c_0\right)\gamma_t{\bf m}(y)dy + \frac{1}{s\epsilon_0}\,\frac{s^2}{c_0^2}{\bf curl}\int_\Gamma G\left(x,y;s/c_0\right){\bf j}(y)dy\\
\label{eq:IntRepCurl}
=\,& {\bf curl\,curl} \int_\Gamma G\left(x,y;s/c_0\right)\gamma_t{\bf m}(y)dy + s\mu_0{\bf curl}\int_\Gamma G\left(x,y;s/c_0\right){\bf j}(y)dy,
 \end{align} }
and thus
{\small \[
 {\bf curl\,curl\,E}^{scat}(x)\! = -\frac{s^2}{c_0^2}{\bf curl}\!\! \int_\Gamma\! G\left(x,y;s/c_0\right)\gamma_t{\bf m}(y)dy + s\mu_0{\bf curl\, curl}\!\!\int_\Gamma\! G\left(x,y;s/c_0\right){\bf j}(y)dy.
\]}
 Multiplying \eqref{eq:ansatz} by $(s/c_0)^2$ and adding the result to the expression above it follows that
 \[
 {\bf curl \, curl E}^{scat} + (s/c_0)^2{\bf E}^{scat} = \mathbf 0 \qquad \text{ in } \Omega_+.
 \]
Making use of the electromagnetic layer potentials, the integral representations \eqref{eq:ansatz} and \eqref{eq:IntRepCurl} for the scattered electric field and its curl can be written succintly as
\[
\mathbf E^{scat} = \widetilde{\mathcal D}(s/c_0)\mathbf m - \frac{1}{s\epsilon_0}\mathcal S(s/c_0)\mathbf j \quad \text{ and } \quad {\bf curl\, E}^{scat} = \widetilde{\mathcal S}(s/c_0)\mathbf m + s\mu_0\mathcal D(s/c_0)\mathbf j.
\]
If we then extend ${\bf E}^{scat}$ by zero in $\Omega_-$, it is possible to use the jump properties of the electromagnetic layer potentials and the boundary integral operators introduced in the previous section to show that the traces of an electric field ${\bf E}^{scat}$ defined using the integral representation above satisfy
\begin{equation}
\label{eq:traceIDs}
\pi_t^+{\bf E}^{scat} = {\bf m} \in \mathbf H^{-1/2}_{\perp}({\bf curl}_\Gamma,\Gamma), \qquad \gamma_t^+\,{\bf curl E}^{scat} = s\mu_0{\bf j} \in \mathbf H^{-1/2}_{\|}({\rm div}_\Gamma,\Gamma), 
\end{equation}
and the system of boundary integral equations
\begin{equation}
\label{eq:BIE}
\mathcal C_{\Omega_+}(s/c_0)\!\begin{pmatrix}
{\bf m} \\[1.2ex] {\bf j}
\end{pmatrix} :=\!
\begin{pmatrix}
1/2 + \widetilde{\mathcal K}(s/c_0) & -(s\epsilon_0)^{-1}\mathcal V(s/c_0) \\[1.2ex]
(s\mu_0)^{-1}\widetilde{\mathcal V}(s/c_0) & 1/2 + \mathcal K(s/c_0)
\end{pmatrix}\!
\begin{pmatrix}
{\bf m} \\[1.2ex] {\bf j}
\end{pmatrix} \!=\!
\begin{pmatrix}
\pi_t^+ {\bf E}^{scat} \\[1.2ex]
(s\mu_0)^{-1}\gamma_t^+{\bf curl E}^{scat}\!
\end{pmatrix}\!.
\end{equation}
Substituting the right hand side of the expression above by appropriate boundary or transmission conditions leads to the BIE for the exterior scattered field in terms of the traces of the incident wave and the transmitted field. The operator $ \mathcal{C}_{\Omega_+}(s) $ defineds implicitly in the equation above is known as the {\it Calder\'{o}n projector }for $\Omega_+$---see \cite{HsWe:2021} and \cite[p.63] {Ce:1996}.

We will end this section by proving that \eqref{eq:ansatz} is indeed the integral representation of the unique distributional solution of a more general Laplace-domain transmission problem for the Maxwell system.

\begin{proposition} \label{pro:h3.1} 
Given $ ({\bf m, j} ) \in {\bf H}_{\perp}^{-1/2}({\rm curl}_{\Gamma}, \Gamma) \times  {\bf H}_{||}^{-1/2} ({\rm div}_{\Gamma} , \Gamma) $, the function ${\bf E}^{scat} \in  {\bf H}({ \bf curl},   \mathbb{R}^3\setminus \Gamma) \cap  {\bf H}({ \bf curl}^2,  \mathbb{R}^3\setminus \Gamma ) $ represented for  $x\in\mathbb{R}^3 \setminus \Gamma$  by
 \begin{equation}
 \label{eq:h3.16}
\mathbf E^{scat} = \widetilde{\mathcal D}(s/c_0)\mathbf m - \frac{1}{s\epsilon_0}\mathcal S(s/c_0)\mathbf j
\end{equation}
is the unique distributional solution of the  transmission problem 
\begin{subequations}
\label{eq:h3.17all}
\begin{alignat}{6}
 \label{eq:h3.17}
{ \bf curl \, curl \, E}^{scat} +  \frac{s^2}{c_0^2} {\bf E}^{scat} =\,& {\bf 0}  && \quad \mbox{ in } \mathbb{R}^3 \setminus \Gamma, \\
\label{eq:h3.18a}
 \jump{\pi_t {\bf E}^{scat} } =\,& -{\bf m}  &&\quad \text{ on } \Gamma,\\
 \label{eq:h3.18b}
\jump{\gamma_t {\bf curl\, E}^{scat}} =\,& -s\mu_0{\bf j}  &&\quad \text{ on } \Gamma.
\end{alignat}
\end{subequations}
Moreover, the solution $ {\bf E}^{scat}$ satisfies  the stability estimate  
\begin{equation}
\label{eq:h3.19}
 \| {\bf E}^{scat}\|_{{\bf curl}, \mathbb{R}^3 \setminus \Gamma} \lesssim \frac{|s|^2} {\sigma \underline{\sigma}^2} \left( \| {\bf  m}  \|_{-1/2,\,  {\rm curl}_{\Gamma},\,  \Gamma}  + \|  { \bf j}  \|_{-1/2,\,  {\rm div}_{\Gamma},  \Gamma} \right). 
\end{equation}
\end{proposition}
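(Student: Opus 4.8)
The plan is to split the statement into three assertions—that \eqref{eq:h3.16} solves the PDE and jump conditions \eqref{eq:h3.17all}, that such a solution is unique, and that it obeys the stability bound \eqref{eq:h3.19}—and to tackle them in that order. The first assertion is essentially the computation already carried out in the text preceding the proposition: differentiating the ansatz twice and using that $G(\cdot,\cdot;s/c_0)$ is the Yukawa fundamental solution (so that ${\bf curl\,curl\,curl}$ acting on the single-layer term reproduces $-(s/c_0)^2{\bf curl}$ of it) yields \eqref{eq:h3.17}, while the jump relations $\jump{\pi_t\widetilde{\mathcal D}(s){\bf m}}=-{\bf m}$ and $\jump{\gamma_t\mathcal D(s){\bf j}}=-{\bf j}$ (together with the continuity of $\mathcal S$ and $\widetilde{\mathcal S}$ across $\Gamma$), applied to $\mathbf E^{scat}$ and to ${\bf curl\,E}^{scat}=\widetilde{\mathcal S}(s/c_0)\mathbf m + s\mu_0\mathcal D(s/c_0)\mathbf j$, give \eqref{eq:h3.18a}–\eqref{eq:h3.18b} after accounting for the factor $-1/(s\epsilon_0)$ and the identity $1/(s\epsilon_0)\cdot(s/c_0)^2 = s\mu_0$. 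Membership in $\mathbf H({\bf curl},\mathbb R^3\setminus\Gamma)\cap\mathbf H({\bf curl}^2,\mathbb R^3\setminus\Gamma)$ follows from the mapping properties of the layer potentials.

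For uniqueness and stability I would work with a single weak formulation that simultaneously yields both. Let $\mathbf E^{scat}\in\mathbf H({\bf curl},\mathbb R^3\setminus\Gamma)$ solve \eqref{eq:h3.17all}; test \eqref{eq:h3.17} against $\overline{\mathbf E^{scat}}$ separately over $\Omega_-$ and $\Omega_+$, integrate by parts using \eqref{eq:ibpInt} and \eqref{eq:ibpExt}, and add. The interior and exterior volume terms assemble into $a(\mathbf E^{scat},\overline{\mathbf E^{scat}};s/c_0)_{\mathbb R^3\setminus\Gamma}$, and the boundary terms combine into a single pairing over $\Gamma$ involving the jumps $\jump{\gamma_t{\bf curl\,E}^{scat}}$ and $\jump{\pi_t\mathbf E^{scat}}$ together with the (continuous) averages; substituting the prescribed jumps $-s\mu_0\mathbf j$ and $-\mathbf m$ turns this into a known data term, plus a term of the form $\langle\ave{\gamma_t{\bf curl\,E}^{scat}},\pi_t\mathbf m\rangle_\Gamma$ whose trace still needs to be controlled. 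To eliminate the dependence on the unknown average I would either add a second equation obtained by testing ${\bf curl}$ of the field (exploiting that ${\bf curl\,E}^{scat}$ satisfies the same Maxwell-type equation), or invoke the trace identities \eqref{eq:traceIDs}, which express $\gamma_t^+{\bf curl\,E}^{scat}=s\mu_0\mathbf j$ and $\pi_t^+\mathbf E^{scat}=\mathbf m$ directly in terms of the data, so that the full Cauchy data on $\Gamma$ is known. Once the identity reads $\bar s\,a(\mathbf E^{scat},\overline{\mathbf E^{scat}};s/c_0)_{\mathbb R^3\setminus\Gamma}=\text{(explicit boundary pairing in }\mathbf m,\mathbf j)$, I take real parts and use the coercivity \eqref{eq:energynormP3}, $\mathrm{Re}(\bar s\,a(\cdot,\cdot;s/c_0))=\sigma\,\triple{\cdot}^2_{|s/c_0|,{\bf curl},\mathbb R^3\setminus\Gamma}$, to bound $\sigma\,\triple{\mathbf E^{scat}}^2$ from below; uniqueness is immediate (zero data forces $\triple{\mathbf E^{scat}}=0$), and for stability I estimate the right-hand boundary pairing by duality of the trace spaces, Cauchy–Schwarz, and the boundedness of the tangential traces and liftings, picking up factors of $|s|$ from the $s\mu_0$ coefficient and from bounding a trace of ${\bf E}^{scat}$ or ${\bf curl\,E}^{scat}$ by $\triple{\mathbf E^{scat}}_{|s|,{\bf curl}}$ (where the trace constant itself scales with $|s|/\underline\sigma$). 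Absorbing one power of $\triple{\mathbf E^{scat}}$ into the left side and converting $\triple{\cdot}_{|s/c_0|}$ to $\|\cdot\|_{{\bf curl}}$ via \eqref{eq:energynormP1} produces the claimed $|s|^2/(\sigma\underline\sigma^2)$ dependence.

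The main obstacle I anticipate is the careful bookkeeping of the $s$-powers in the boundary pairing: the estimate has to come out as exactly $|s|^2/(\sigma\underline\sigma^2)$, which means the duality bound on $\Gamma$ must be performed in the $|s|$-weighted trace norms (not the unweighted graph norms), tracking that the bound $\|\gamma_t\mathbf u\|_{-1/2,{\rm div}_\Gamma,\Gamma}\lesssim \triple{\mathbf u}_{|s|,{\bf curl},\mathcal O}$ and its $\pi_t$ counterpart each carry the correct power of $|s|/\underline\sigma$, and that the explicit factor $s\mu_0$ in the second jump is handled without double-counting. A secondary subtlety is justifying the integration-by-parts step and the use of the jump relations at the low regularity of the trace spaces involved (this is exactly the Buffa–Costabel–Sheen machinery cited in the excerpt), but since those results are quoted as available, the real work is the quantitative $s$-tracking.
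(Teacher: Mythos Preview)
Your verification that the layer-potential ansatz satisfies the PDE and the jump conditions is fine, and your uniqueness argument (zero data $\Rightarrow$ both jumps vanish $\Rightarrow$ no boundary terms $\Rightarrow$ coercivity forces the solution to vanish) is sound. The gap is in the stability step, specifically in how you propose to dispose of the average term $\langle\ave{\gamma_t{\bf curl\,E}^{scat}},\overline{\mathbf m}\rangle_\Gamma$ that appears when you test directly against $\overline{\mathbf E^{scat}}$.

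Your second option---invoking \eqref{eq:traceIDs}---does not apply here. Those identities are stated in the paper for the field \emph{extended by zero} in $\Omega_-$, so that $\pi_t^-\mathbf E^{scat}=\mathbf 0$ and the jump collapses to $-\pi_t^+\mathbf E^{scat}$. For the layer-potential representation \eqref{eq:h3.16} on all of $\mathbb R^3\setminus\Gamma$, the interior traces are nonzero (they involve $\widetilde{\mathcal K}$ and $\mathcal V$ acting on the densities), so the exterior traces are \emph{not} simply $\mathbf m$ and $s\mu_0\mathbf j$, and the boundary pairing is not ``explicit in $\mathbf m,\mathbf j$'' as you want. Your first option (``add a second equation by testing ${\bf curl}$ of the field'') is too vague to assess; it is not clear how it would close the estimate without producing yet another unknown average.

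The paper sidesteps this difficulty by a lifting: pick $\widetilde{\mathbf E}\in\mathbf H({\bf curl},\mathbb R^3\setminus\Gamma)$ with $\jump{\pi_t\widetilde{\mathbf E}}=-\mathbf m$, set $\mathbf E_0:=\mathbf E^{scat}-\widetilde{\mathbf E}$, and test only against $\mathbf F$ with $\jump{\pi_t\mathbf F}=0$. In that test space the term $\langle\ave{\gamma_t{\bf curl\,E}^{scat}},\jump{\pi_t\mathbf F}\rangle_\Gamma$ vanishes structurally, leaving the clean identity
\[
a(\mathbf E_0,\mathbf F;s/c_0)_{\mathbb R^3\setminus\Gamma}=-a(\widetilde{\mathbf E},\mathbf F;s/c_0)_{\mathbb R^3\setminus\Gamma}+s\mu_0\langle\mathbf j,\pi_t\mathbf F\rangle_\Gamma,
\]
to which Lax--Milgram and the coercivity \eqref{eq:energynormP3} apply directly; the $s$-bookkeeping then goes exactly as you anticipate and yields $|s|^2/(\sigma\underline\sigma^2)$. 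Your direct approach \emph{can} be salvaged---one may bound $\|\ave{\gamma_t{\bf curl\,E}^{scat}}\|_{-1/2,{\rm div}_\Gamma,\Gamma}$ via a trace inequality and the PDE (using ${\bf curl\,curl\,E}^{scat}=-(s/c_0)^2\mathbf E^{scat}$), then absorb one copy of $\triple{\mathbf E^{scat}}_{|s|}$ into the left side---but that is not the mechanism you actually describe, and the two fixes you do propose would not get you there.
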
 
\begin{proof}
We start by extending ${\bf m} $ to a function $ \widetilde{{\bf E}} \in {\bf H}({\bf curl}, \mathbb{R}^3 \setminus \Gamma)$ such that
$\jump{\pi_t \widetilde{{\bf E}} }\!\!= - {\bf m}$. This can be done easily by taking a lifting of ${\bf m}$ into $\Omega_+$ and extending it by zero for $x\in\Omega_-$. Now, for $\mathbf E^{scat}$ satisfying \eqref{eq:h3.16}, we define ${\bf E}_0 := {\bf E}^{scat} - \widetilde{\bf E}$ and observe that $\jump{\pi_t ( {\bf E}^{scat}  - \widetilde{ {\bf E} })} = {\bf 0}$. 

Multiplying \eqref{eq:h3.17} by a test function $ {\bf F} \in \left\{{\bf H}({\bf curl},  \mathbb{R}^3\setminus \Gamma): \jump{\pi_t {\bf F}}=\boldsymbol 0\right\}$ and integrating by parts, we obtain the  variational formulation for \eqref{eq:h3.17all}:
\[
 a\left({\bf E}_0, {\bf F} ; s/c_0\right)_{\mathbb{R}^3 \setminus \Gamma} = - a( \widetilde{\bf E}, {\bf F}; s/c_0 )_{\mathbb{R}^3 \setminus \Gamma} + s\mu_0\langle {\bf j}, \pi_t {\bf F}\rangle_{\Gamma},
\]
where the bilinear form $a(\cdot,\cdot)_{\mathbb{R}^3 \setminus \Gamma}$ is the one defined in \eqref{eq:h3.2}. Using the associated energy norm \eqref{eq:energynorm} and the properties \eqref{eq:energynormP1}, \eqref{eq:energynormP2}, and \eqref{eq:energynormP3} it is easy to see that  
\[
\sigma \underline{\sigma}^2 
\triple{ {\bf E}_0}^2_{|1|} \leq {\rm Re }\left\{\bar{s} \;  a\left( {\bf E}_0, \overline{\bf E}_0; s/c_0\right)_{\mathbb{R}^3 \setminus \Gamma}  \right\} 
\]
and hence, by the Lax-Milgram lemma, there exists a unique solution ${\bf E}_0 \in {\bf H} ({\bf curl}, \mathbb{R}^3 \setminus \Gamma)$ for the variational problem above. Consequently
\[
{\bf E}^{scat} :=  {\bf E}_0 +  \widetilde{\bf E} \in {\bf H}( {\bf curl},  \mathbb{R}^3 \setminus \Gamma)
\]
is the unique solution of the transmission problem defined by  \eqref{eq:h3.17all}. 

Now, for the stability estimate \eqref{eq:h3.19}, we begin with 
\begin{align*}
\sigma \triple{ {\bf E}_0}^2_{|s/c_0|} 
& = {\rm Re} \left\{\bar{s} \; a({\bf E}_0, \overline{\bf E}_0; s/c_0)_{\mathbb{R}^3 \setminus \Gamma}  \right\} \\
& = {\rm Re } \left\{ \bar{s}s\mu_0 \langle {\bf j}, \pi_t\, \overline{\bf E}_0\rangle_{\Gamma} -  \bar{s}\, a( \widetilde{\bf E},  \overline{\bf E}_0; s/c_0 )_{\mathbb{R}^3 \setminus \Gamma} \right\}\\
&\lesssim |s|^2 \, \| {\bf j}\|_{-1/2, {\rm div}_{\Gamma}, \Gamma} \; \triple {{\bf E}_0}_{|1|} + |s| \, 
\triple {\widetilde{\bf E}}_{|s/c_0|} \, \triple{{\bf E}_0}_{|s/c_0|}\\
&\lesssim \frac{|s|^2}{\underline{\sigma}}\| {\bf j }\|_{-1/2, {\rm div}_{\Gamma}, \Gamma}  \, \triple {{\bf E}_0}_{|s/c_0|} +  \frac{|s|^2}{\underline{\sigma}} \|\widetilde{\bf E}\|_{{\bf curl}, \mathbb{R}^3\setminus{\Gamma}} \triple{ {\bf E}_0}_{|s/c_0|} \\ 
& \lesssim \frac{|s|^2}{\underline{\sigma}}\left( \|{\bf j}\|_{-1/2, {\rm div}_{\Gamma}, \Gamma}
+ \| \widetilde{ \bf E } \| _{{\bf curl}, \mathbb{R}^3\setminus{\Gamma}} \right) \triple {{\bf E}_0}_{|s/c_0|}, 
\end{align*}
which together with \eqref{eq:energynormP1} implies 
\[
 \| {\bf E}_0\|_{{\bf curl,}  \mathbb{R}^3\setminus \Gamma} 
\lesssim \frac{|s|^2}{\sigma\underline{\sigma}^2}\left(\|{\bf j}\|_{-1/2, {\rm div}_{\Gamma}, \, \Gamma}
+ \|\widetilde{ \bf E}\| _{{\bf curl}, \mathbb{R}^3\setminus{\Gamma}}   \right). 
\]
Hence, from the definition of ${\bf E}$ and of the norm $\|\cdot\|_{-1/2, {\rm curl}_{\Gamma}, \Gamma}$ it follows that
\begin{align*}
\| {\bf E}^{scat}\|_{{\bf curl,}  \mathbb{R}^3 \setminus \Gamma}  
 \leq &  \left(\|{\bf E}_0\|_{{\bf curl,}  \mathbb{R}^3\setminus \Gamma } + \|  \widetilde{\bf E}\|_{{\bf curl,}  \mathbb{R}^3 \setminus \Gamma} \right)\\
 \lesssim  & \left( \frac{|s|^2}{\sigma\, \underline{\sigma}^2}\|{\bf j }\|_{-1/2, {\rm div}_{\Gamma}, \Gamma}+
 \left(\frac{|s|^2}{\sigma\, \underline{\sigma}^2} +1 \right) \|  \widetilde{ \bf E }\|_{{\bf curl,}  \mathbb{R}^3 \setminus \Gamma}  \right)\\
  \lesssim  & \left( \frac{|s|^2}{\sigma\, \underline{\sigma}^2} \|{\bf j }\|_{-1/2, {\rm div}_{\Gamma}, \Gamma}+
 2\frac{|s|^2}{\sigma\, \underline{\sigma}^2}\|  \widetilde{ \bf E}\|_{{\bf curl,}  \mathbb{R}^3 \setminus \Gamma}  \right)\\
\lesssim  &  \frac{|s|^2}{\sigma\, \underline{\sigma}^2}   \left(\|{\bf j}\|_{-1/2, {\rm div}_{\Gamma}, \Gamma}
+ \| {\bf m} \|_{-1/2, {\rm curl}_{\Gamma}, \Gamma}  \right).
\end{align*}
This completes the proof of the proposition. 
\end{proof}
%
%
%
\section{Boundary-field formulation}\label{sec:5}
%
As we anticipated in the previous section, when using the electromagnetic layer potential representation \eqref{eq:ansatz} to recast the problem as a boundary-field formulation the transmission conditions \eqref{eq:h2.8c} and \eqref{eq:h2.8d} become key tools in obtaining a system of BIEs for the scattered field. In view of the identities \eqref{eq:traceIDs} and the properties of the tangential traces, these conditions become: 
 \begin{alignat*}{6}
\gamma_t^- {\bf curl \, E} =\,& s\mu{\bf j} +  (\mu/\mu_0)\gamma_t ^+{\bf curl\,E}^{inc} \qquad&& \text{ on } \Gamma,\\
 \pi^-_t {\bf E} =\,& { \bf m }  + \pi^+_t { \bf E}^{inc}   \qquad&& \text{ on } \Gamma. 
\end{alignat*}
Recalling that the electric field in the bounded region $\Omega_-$ satisfies the variational problem \eqref{eq:h3.1} and its operator counterpart \eqref{eq:h3.3}, the first transmission condition simply implies that if $ {\bf j} $ is known we only need to solve an interior Neumann problem for $ {\bf E}$ in $\Omega_-$. This provides enough information to solve the interior field problem and couples it to the solution of the system of BIEs. The second transmission condition above can be substituted in the right hand side of the first row of \eqref{eq:BIE}, further coupling the interior and exterior problems. However, a \textit{third} condition is missing to close the problem. The key observation is that, since the external scattered field will be extended by zero in $\Omega_-$, we must then have
\[
\gamma_t^{-}{\bf curl\, E}^{scat}={\bf 0}.
\]
Using the electromagnetic layer potential representation of ${\bf curl\,E}^{scat}$ and its jump properties, this condition can be written in terms of boundary integral operators as
\[
(s\mu_0)^{-1}\gamma_t^{-}{\bf curl\,E}^{scat} = (s\mu_0) \left(-1/2+\mathcal K(s/c_0)\right){\bf j} +\widetilde{\mathcal V}(s/c_0){\bf m} = \boldsymbol 0.
\]
Putting all of these ingredients together the problem can be restated as that of, given data
\[
( \pi^+_t{\bf E}^{inc},  \gamma_t ^+{\bf curl \,E}^{inc}, {\bf J}  ) \in {\bf H}^{-1/2}_{\perp} ( {\rm curl}_{\Gamma}, \Gamma) \times {\bf H}^{-1/2}_{\|} ( {\rm div}_{\Gamma}, \Gamma) \times \mathbf L^2(\Omega_-),
\] 
finding
\[
({\bf E}, {\bf j}, {\bf  m})\in {\bf H} ({\bf curl}, \Omega_-)\times {\bf H}^{-1/2}_{\|}( {\rm div}_{\Gamma}, \Gamma)\times {\bf H}_{\perp}^{-1/2}( {\rm curl}_{\Gamma}, \Gamma)
\]
satisfying the boundary-field system:
{\small \begin{equation}
\label{eq:h4.5}
\!\!\!\begin{pmatrix}
\!{\bf A}_{\Omega_-}(s/c) & s\mu\left(\pi_t^-\right)^\prime & 0 \\
\pi_t^- & \;(s\epsilon_0)^{-1}\mathcal V(s/c_0)\; & -1/2 - \widetilde{\mathcal K}(s/c_0) \!\!\! \\
0 & \left(-1/2+\mathcal K(s/c_0)\right) & (s\mu_0)^{-1}\widetilde{\mathcal V}(s/c_0) 
\end{pmatrix}\!\!\!
\begin{pmatrix}
{\bf E} \\ {\bf j} \\ {\bf m}
\end{pmatrix}
\!\! = \!\!
\begin{pmatrix}
\!-s\mu {\bf J} \!-(\mu/\mu_0)\left(\pi_t^-\right)^\prime\!\!\gamma^+_t {\bf curl\, E}^{inc} \\ \pi^+_t {\bf E}^{inc} \\ {\bf 0}
\end{pmatrix}\!.
\end{equation}}
We will show that this problem is indeed solvable in the following section.
%
\subsection{An equivalent BVP}
%
Our next task will be to show that Equation \eqref{eq:h4.5} has a unique solution. In \cite[Lemma 3.5]{NiKoLu:2022}, Nick, Kov\'{a}cs \& Lubich established this result by recasting the problem as a first order system and analyzing the modified Stratton-Chu formula. However, in order to obtain sharper estimates, in this work we decide to analyze the problem indirectly. Following Laliena and Sayas \cite{LaSa:2009} in acoustics, and Hsiao, S\'anchez-Vizuet \& Wendland \cite{HsSaWe:2022} in elastodynamics, we will formulate a non standard boundary value problem for ${\bf E}$ in $\Omega_-$ and ${\bf E}^{scat} \in \mathbb{R}^3 \setminus \Gamma$ and will show its well-posedness. We will also show that a solution  triplet $\left( {\bf E}, -(s\mu_0)^{-1}\jump{\gamma_t{\bf curl \, E}^{scat} }, -\jump{\pi_t{\bf E}^{scat}}\right)$ of said boundary value problem can be used to establish the solution of the nonlocal boundary problem \eqref{eq:h4.5} and that the reciprocal is also true. This will justify the notation for the variables of the auxiliary problem and will show the well posedness of \eqref{eq:h4.5}.

\begin{proposition} 
Suppose that $( {\bf E}, {\bf j}, \bf{ m}) $ is a solution of \eqref{eq:h4.5} and let us define for $x\in\mathbb{R}^3\setminus\Gamma$
\begin{equation}
\label{eq:h4.7}
{\bf E}^{scat}\! (x) \!\!:= \widetilde{\mathcal D}(s/c_0){\bf m} - \frac{1}{s\epsilon_0}\mathcal S(s/c_0){\bf j}
\end{equation}
Then $({\bf E},{\bf E}^{scat})\in  {\bf H}({\bf curl},\Omega_-)\times {\bf H}({\bf curl},\mathbb R^3\setminus\Gamma)$ is a solution of the transmission problem
{\small\begin{subequations}
\label{eq:nonstdtp}
\begin{alignat}{6}
\label{eq:nonstdtpA}
{\bf A}_{\Omega_-}\!(\!s/c\!){\bf E} + \! (\mu/\!\mu_0\!)\!\!\left(\pi^-\right)^\prime\!\!\jump{\gamma_t{\bf curl\,E}^{scat}}\!\!=\! & -\!s\mu {\bf J} \!-\!(\mu/\!\mu_0\!)\!\!\left(\pi_t^-\right)^\prime\!\!\gamma^+_t \!{\bf curl\, E}^{inc} &\,& \text{ in }{\bf H}^\prime({\bf curl},\Omega_-\!),\\
\label{eq:nonstdtpB}
{ \bf curl \, curl \, E}^{scat} +  \frac{s^2}{c_0^2} {\bf E}^{scat} =\,& {\bf 0}  & \, & \text{ in } \mathbb{R}^3 \setminus \Gamma,\\
\label{eq:nonstdtpC}
\pi^-_t {\bf E} - \pi^+_t { \bf E}^{scat}=\, & \pi^+_t { \bf E}^{inc}  &\,& \text{ in } {\bf H}^{-1/2}_{\perp}\! ({\rm curl}_{\Gamma}, \Gamma),\\
 \label{eq:nonstdtpD}
 \gamma_t^{-}{\bf curl\, E}^{scat}=\,&{\bf 0} &\,& \text{ in } {\bf H}^{-1/2}_{\|}\! ( {\rm div}_{\Gamma}, \Gamma).
\end{alignat}
\end{subequations}}
Conversely, given $({\bf E},{\bf E}^{scat})$ satisfying \eqref{eq:nonstdtp}, the triplet
\[
( {\bf E}, {\bf j}, {\bf m}) := \left( {\bf E}, -(s\mu_0)^{-1}\jump{\gamma_t{\bf curl \, E}^{scat} }, -\jump{\pi_t{\bf E}^{scat}}\right)
\]
is a solution to the boundary-field system \eqref{eq:h4.5}.
\end{proposition}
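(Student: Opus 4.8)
The plan is to prove the two implications of the equivalence in turn; the single substantive ingredient is Proposition \ref{pro:h3.1}, and everything else is the translation between the ``Cauchy-data'' encoding of the scattered field used in \eqref{eq:h4.5} and the ``jump-data'' encoding used in \eqref{eq:nonstdtp}, carried out with the one-sided trace formulas \eqref{eq:LPtraces}, the average/jump relations for the electromagnetic layer potentials, the identities \eqref{eq:traceIDs}, and the definition \eqref{eq:h3.3} of $\mathbf A_{\Omega_-}$.

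\emph{From \eqref{eq:h4.5} to \eqref{eq:nonstdtp}.} Given a solution $({\bf E},{\bf j},{\bf m})$ of \eqref{eq:h4.5}, I would set ${\bf E}^{scat}$ as in \eqref{eq:h4.7} and note that this is precisely the potential \eqref{eq:h3.16}. Proposition \ref{pro:h3.1} then immediately delivers ${\bf E}^{scat}\in{\bf H}({\bf curl},\mathbb R^3\setminus\Gamma)$, the PDE \eqref{eq:nonstdtpB}, and the jump relations $\jump{\pi_t{\bf E}^{scat}}=-{\bf m}$, $\jump{\gamma_t{\bf curl\,E}^{scat}}=-s\mu_0{\bf j}$. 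Combining the second jump with the exterior trace $\gamma_t^+{\bf curl\,E}^{scat}=s\mu_0{\bf j}$ from \eqref{eq:traceIDs} forces $\gamma_t^-{\bf curl\,E}^{scat}={\bf 0}$, which is \eqref{eq:nonstdtpD}. Evaluating $\pi_t^+{\bf E}^{scat}$ from \eqref{eq:h4.7} via \eqref{eq:LPtraces} and the continuity $\jump{\pi_t\mathcal S(s/c_0){\bf j}}={\bf 0}$ shows that the operator block $(s\epsilon_0)^{-1}\mathcal V(s/c_0){\bf j}-(1/2+\widetilde{\mathcal K}(s/c_0)){\bf m}$ occurring in the middle row of \eqref{eq:h4.5} equals $-\pi_t^+{\bf E}^{scat}$, so that row becomes \eqref{eq:nonstdtpC}. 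Finally, substituting $s\mu{\bf j}$ in terms of $\jump{\gamma_t{\bf curl\,E}^{scat}}$ into the top row of \eqref{eq:h4.5} and recalling \eqref{eq:h3.3} turns it into \eqref{eq:nonstdtpA}.

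\emph{From \eqref{eq:nonstdtp} to \eqref{eq:h4.5}.} Conversely, given $({\bf E},{\bf E}^{scat})$ solving \eqref{eq:nonstdtp}, define ${\bf j}:=-(s\mu_0)^{-1}\jump{\gamma_t{\bf curl\,E}^{scat}}$ and ${\bf m}:=-\jump{\pi_t{\bf E}^{scat}}$, which lie in the correct trace spaces since \eqref{eq:nonstdtpB} upgrades ${\bf E}^{scat}$ to ${\bf H}({\bf curl}^2,\mathbb R^3\setminus\Gamma)$ and $\gamma_t,\pi_t$ are bounded. Then \eqref{eq:nonstdtpB} together with these definitions says precisely that ${\bf E}^{scat}$ solves the transmission problem \eqref{eq:h3.17all} for the data $({\bf m},{\bf j})$; by the \emph{uniqueness} part of Proposition \ref{pro:h3.1}, ${\bf E}^{scat}$ must then equal the layer potential on the right of \eqref{eq:h4.7}, so \eqref{eq:h4.7} is automatically satisfied. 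From this point the trace computations of the first part run in reverse: \eqref{eq:traceIDs} and \eqref{eq:LPtraces} recover $\pi_t^+{\bf E}^{scat}$ and $\gamma_t^\pm{\bf curl\,E}^{scat}$ as explicit expressions in ${\bf j},{\bf m}$; \eqref{eq:nonstdtpC} becomes the middle row of \eqref{eq:h4.5}; \eqref{eq:nonstdtpD} inserted into the expression for $\gamma_t^-{\bf curl\,E}^{scat}$ in terms of $\mathcal K(s/c_0)$ and $\widetilde{\mathcal V}(s/c_0)$ yields the bottom row; and \eqref{eq:nonstdtpA}, rewritten through ${\bf j}$ and using the interior Green identity behind \eqref{eq:h3.3}, yields the top row.

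\emph{Expected main obstacle.} There is no analytic difficulty beyond Proposition \ref{pro:h3.1}; the real work is purely organizational --- correctly listing all the one-sided traces of ${\bf E}^{scat}$ and ${\bf curl\,E}^{scat}$ on both faces of $\Gamma$, keeping the $\pm 1/2$ terms of $\widetilde{\mathcal D}$ and $\mathcal D$ on the right side, and tracking signs together with the factors $s\mu$ versus $s\mu_0$ when passing between $({\bf m},{\bf j})$ and $(\jump{\pi_t{\bf E}^{scat}},\jump{\gamma_t{\bf curl\,E}^{scat}})$. The conceptual heart is the use of uniqueness for the Maxwell transmission problem, which is what lets the abstract solution of the non-standard boundary value problem \eqref{eq:nonstdtp} be identified with the concrete potential \eqref{eq:h4.7}; once that identification is made, \eqref{eq:h4.5} and \eqref{eq:nonstdtp} are equivalent term by term.
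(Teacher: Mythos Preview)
Your overall strategy matches the paper's: invoke Proposition \ref{pro:h3.1} for the PDE and the jump relations of ${\bf E}^{scat}$, translate between the rows of \eqref{eq:h4.5} and the conditions \eqref{eq:nonstdtp} via the one-sided trace formulas, and in the converse use uniqueness in Proposition \ref{pro:h3.1} to identify ${\bf E}^{scat}$ with the layer potential \eqref{eq:h4.7}.

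There is, however, a small logical gap in your forward derivation of \eqref{eq:nonstdtpD}. You obtain $\gamma_t^-{\bf curl\,E}^{scat}={\bf 0}$ by combining the jump relation with the exterior identity $\gamma_t^+{\bf curl\,E}^{scat}=s\mu_0{\bf j}$ quoted from \eqref{eq:traceIDs}. But \eqref{eq:traceIDs} is not a general consequence of the layer-potential ansatz; in the paper it is stated under the hypothesis that ${\bf E}^{scat}$ vanishes in $\Omega_-$, which is precisely (the trace version of) what you are trying to prove. As written, your forward argument never uses the third row of \eqref{eq:h4.5} at all, yet \eqref{eq:nonstdtpD} cannot be derived without it. The fix --- which is what the paper does --- is to observe that the third row of \eqref{eq:h4.5} \emph{is} \eqref{eq:nonstdtpD}: applying the interior trace formulas $\gamma_t^-\mathcal D(s/c_0)=-\tfrac12+\mathcal K(s/c_0)$ and $\gamma_t\widetilde{\mathcal S}(s/c_0)=\widetilde{\mathcal V}(s/c_0)$ to the representation of ${\bf curl\,E}^{scat}$ gives
\[
(s\mu_0)^{-1}\gamma_t^-{\bf curl\,E}^{scat} = \big(-\tfrac12+\mathcal K(s/c_0)\big){\bf j} + (s\mu_0)^{-1}\widetilde{\mathcal V}(s/c_0){\bf m},
\]
and the third row of \eqref{eq:h4.5} sets this to zero. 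Once \eqref{eq:nonstdtpD} is obtained this way, the exterior identity $\gamma_t^+{\bf curl\,E}^{scat}=s\mu_0{\bf j}$ follows from the jump, and the remainder of your argument (including the converse) is correct.
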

\begin{proof}
From the definition of ${\bf E}^{scat}$ and Proposition \ref{pro:h3.1}, it follows that ${\bf E}^{scat}$ satisfies \eqref{eq:nonstdtpB} and
\[
{\bf j} = -(s\mu_0)^{-1}\jump{\gamma_t{\bf curl\, E}^{scat}} \quad \text{ and } \quad {\bf m} = - \jump{\pi_t{\bf E}^{scat}}.
\]
Hence, using these expressions, the trace identities \eqref{eq:LPtraces}, and the definition of the boundary integral operators on the integral representation of ${\bf E}^{scat}$, it follows that conditions \eqref{eq:nonstdtpC} and \eqref{eq:nonstdtpD} are restatements of the second and third equations in \eqref{eq:h4.5}.

Conversely, \eqref{eq:nonstdtpB} and the definition of ${\bf j}$ and ${\bf m}$ imply the integral representation \eqref{eq:h4.7}. This representation enables the use of the identities \eqref{eq:LPtraces} and the properties of the boundary integral operators from which a simple algrebraic calculation shows the equivalence between \eqref{eq:nonstdtpA}, \eqref{eq:nonstdtpC}, and \eqref{eq:nonstdtpD} to the system  \eqref{eq:h4.5}.  
\end{proof}

As we shall now prove, the transmission problem \eqref{eq:nonstdtp} above can be shown to be well-posed through variational means. Thus, the relevance of the previous result is that it extends the well posedness to the boundary field formulation \eqref{eq:h4.5}. 

\begin{proposition}
\label{prop:EquivalentBVP}
Consider the function spaces
\[
\mathbf H^* := \left\{{\bf E}\in {\bf H}({\bf curl},\mathbb R^3\setminus\Gamma): \gamma_t^-{\bf curl\, E} = \bf 0 \right\} \qquad \text{ and } \qquad
\mathbb H : = {\bf H}({\bf curl},\Omega_-)\times \mathbf H^*.
\]
The transmission problem \eqref{eq:nonstdtp} is equivalent to that of finding $({\bf E},{\bf E}^{scat})\in \mathbb H $ satisfying
\begin{subequations}
\label{eq:variationalprob}
\begin{alignat}{6}
\label{eq:variationaltrace}
\pi^-_t {\bf E} - \pi^+_t { \bf E}^{scat}=\,& \pi^+_t { \bf E}^{inc}   &\quad& \text{ in } {\bf H}^{-1/2}_{\perp} ( {\rm curl}_{\Gamma}, \Gamma),\\[1.5ex]
 \label{eq:variationaleq}
{\rm B}\left(({\bf E},{\bf E}^{scat}),({\bf F},{\bf G})\right) =\,& \ell\left(({\bf F},{\bf G})\right) &\quad& \; \forall \; ({\bf F},{\bf G})\in \mathbb H,
\end{alignat}
where
\begin{align}
\label{eq:variationalprobBilinear}
{\rm B}\left(({\bf E},{\bf E}^{scat}),({\bf F},{\bf G})\right) :=&\quad (\mu_0/\mu)a({\bf E},{\bf F};s/c)_{\Omega_-} + \, a({\bf E}^{scat},{\bf G};s/c_0)_{\mathbb R^3\setminus\Gamma} \\
\nonumber
& + \langle\gamma^+_t{\bf curl\,E}^{scat}\!,\pi_t^-{\bf F}- \pi_t^+{\bf G}\rangle_\Gamma ,\\[1.5ex]
\label{eq:variationalprobLinear}
\ell\left(({\bf F},{\bf G})\right) :=\,& -s\mu_0\left( {\bf J},{\bf F }\right)_{\Omega_-} -\langle\gamma^+_t {\bf curl\, E}^{inc}\!,\pi_t^-{\bf F}\rangle_\Gamma.
\end{align}
\end{subequations}
\end{proposition}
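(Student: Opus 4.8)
The plan is to show the equivalence between \eqref{eq:nonstdtp} and \eqref{eq:variationalprob} in two directions, and the key technical device is the integration by parts formulas \eqref{eq:integrationbyparts} applied on $\Omega_-$ and on both components of $\mathbb R^3\setminus\Gamma$. First I would establish that \eqref{eq:variationalprob} follows from \eqref{eq:nonstdtp}. Given $({\bf E},{\bf E}^{scat})$ solving \eqref{eq:nonstdtp}, the condition \eqref{eq:nonstdtpD} immediately gives ${\bf E}^{scat}\in\mathbf H^*$, so the solution lives in the right space $\mathbb H$, and \eqref{eq:nonstdtpC} is exactly \eqref{eq:variationaltrace}. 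To get the variational equation \eqref{eq:variationaleq}, I would test \eqref{eq:nonstdtpA} against ${\bf F}$, test \eqref{eq:nonstdtpB} against ${\bf G}$, integrate the latter by parts over $\mathbb R^3\setminus\Gamma$ using both \eqref{eq:ibpInt} and \eqref{eq:ibpExt}, and add. The interior test produces $a({\bf E},{\bf F};s/c)_{\Omega_-}$ together with the duality pairing $(\mu/\mu_0)\langle\jump{\gamma_t{\bf curl\,E}^{scat}},\pi_t^-{\bf F}\rangle_\Gamma$ and the right-hand side $-s\mu({\bf J},{\bf F})_{\Omega_-}-(\mu/\mu_0)\langle\gamma_t^+{\bf curl\,E}^{inc},\pi_t^-{\bf F}\rangle_\Gamma$; dividing by $\mu/\mu_0$ matches the scaling in \eqref{eq:variationalprobBilinear}–\eqref{eq:variationalprobLinear}. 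The exterior test produces $a({\bf E}^{scat},{\bf G};s/c_0)_{\mathbb R^3\setminus\Gamma}$ plus boundary terms $\langle\gamma_t^-{\bf curl\,E}^{scat},\pi_t^-{\bf G}\rangle_\Gamma - \langle\gamma_t^+{\bf curl\,E}^{scat},\pi_t^+{\bf G}\rangle_\Gamma$; the first vanishes by \eqref{eq:nonstdtpD}. Collecting the surviving interface terms and using $\jump{\gamma_t{\bf curl\,E}^{scat}}=\gamma_t^-{\bf curl\,E}^{scat}-\gamma_t^+{\bf curl\,E}^{scat}=-\gamma_t^+{\bf curl\,E}^{scat}$ (again by \eqref{eq:nonstdtpD}) reassembles precisely the coupling term $\langle\gamma_t^+{\bf curl\,E}^{scat},\pi_t^-{\bf F}-\pi_t^+{\bf G}\rangle_\Gamma$ appearing in ${\rm B}$.

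Next I would prove the converse: a solution of \eqref{eq:variationalprob} solves \eqref{eq:nonstdtp}. The membership ${\bf E}^{scat}\in\mathbf H^*$ gives \eqref{eq:nonstdtpD} for free, and \eqref{eq:variationaltrace} is \eqref{eq:nonstdtpC}. To recover the PDEs \eqref{eq:nonstdtpA} and \eqref{eq:nonstdtpB}, I would make specific choices of test functions in \eqref{eq:variationaleq}. Choosing ${\bf F}=\bf 0$ and letting ${\bf G}$ range over $\mathbf C_0^\infty(\Omega_-)^3$ and over $\mathbf C_0^\infty(\Omega_+)^3$ (which are subspaces of $\mathbf H^*$ since their ${\bf curl}$-traces vanish) kills all boundary terms and yields ${\bf curl\,curl\,E}^{scat}+(s/c_0)^2{\bf E}^{scat}={\bf 0}$ in the distributional sense on each component, hence \eqref{eq:nonstdtpB}. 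Once \eqref{eq:nonstdtpB} holds, undoing the integration by parts over $\mathbb R^3\setminus\Gamma$ for general ${\bf G}\in\mathbf H^*$ shows that the remaining identity reads $(\mu_0/\mu)a({\bf E},{\bf F};s/c)_{\Omega_-}+\langle\gamma_t^+{\bf curl\,E}^{scat},\pi_t^-{\bf F}\rangle_\Gamma - \langle\gamma_t^-{\bf curl\,E}^{scat},\pi_t^+{\bf G}\rangle_\Gamma + \langle\gamma_t^+{\bf curl\,E}^{scat},-\pi_t^+{\bf G}\rangle_\Gamma+\langle\gamma_t^+{\bf curl\,E}^{scat},\pi_t^+{\bf G}\rangle_\Gamma$ collapses, using $\gamma_t^-{\bf curl\,E}^{scat}={\bf 0}$, to $(\mu_0/\mu)a({\bf E},{\bf F};s/c)_{\Omega_-}+\langle\gamma_t^+{\bf curl\,E}^{scat},\pi_t^-{\bf F}\rangle_\Gamma=\ell(({\bf F},{\bf 0}))$. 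Since $\gamma_t^+{\bf curl\,E}^{scat}=-\jump{\gamma_t{\bf curl\,E}^{scat}}$, multiplying through by $\mu/\mu_0$ and identifying the action on ${\bf F}$ with an element of $\mathbf H^\prime({\bf curl},\Omega_-)$ gives exactly \eqref{eq:nonstdtpA}.

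The main obstacle I anticipate is purely bookkeeping rather than conceptual: tracking the placement of the factors $\mu/\mu_0$ versus $\mu_0/\mu$ and the signs on the jump terms so that the ``$-(\mu/\mu_0)(\pi_t^-)^\prime\gamma_t^+{\bf curl\,E}^{inc}$'' on the right of \eqref{eq:nonstdtpA} and the coupling term in ${\rm B}$ line up consistently, and making sure the dual/lifting conventions (the implicit $\pi_t^\dagger$'s hidden in pairings like $\langle\gamma_t^+{\bf curl\,E}^{scat},\pi_t^-{\bf F}\rangle_\Gamma$, which is well defined precisely because $\mathbf H_{\|}^{-1/2}({\rm div}_\Gamma,\Gamma)$ is dual to $\mathbf H_{\perp}^{-1/2}({\rm curl}_\Gamma,\Gamma)$) are used coherently throughout. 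A secondary point requiring a line of justification is that the two density/test-function arguments used to localize the PDE in $\Omega_\pm$ are legitimate, i.e.\ that the Green's formulas \eqref{eq:integrationbyparts} extend from smooth functions to all of $\mathbf H({\bf curl},\cdot)$ on Lipschitz domains — but this is quoted from \cite{BuCoSh:2002} and already invoked in the excerpt, so it may be cited rather than reproved.
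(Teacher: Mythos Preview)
Your proposal is correct and follows essentially the same route as the paper's own proof: both directions use the integration-by-parts formulas \eqref{eq:integrationbyparts}, the simplification $\jump{\gamma_t{\bf curl\,E}^{scat}}=-\gamma_t^+{\bf curl\,E}^{scat}$ coming from \eqref{eq:nonstdtpD}, and the decoupling via test pairs $({\bf F},{\bf 0})$ and $({\bf 0},{\bf G})$ with ${\bf G}$ compactly supported away from $\Gamma$ to recover \eqref{eq:nonstdtpB} distributionally. Your bookkeeping concerns about the $\mu/\mu_0$ factors and the sign of the coupling term are exactly the points the paper handles by multiplying \eqref{eq:nonstdtpA} through by $\mu_0/\mu$ at the outset, and the density and trace-duality issues you flag are indeed simply cited from \cite{BuCoSh:2002} rather than reproved.
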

\begin{proof}
Let $({\bf E},{\bf E}^{scat})$ be a solution to \eqref{eq:nonstdtp} and note that \eqref{eq:variationaltrace} is satisfied automatically. We first multiply  \eqref{eq:nonstdtpA} by $(\mu_0/\mu)$ and test with ${\bf F}\in {\bf H}({\bf curl},\Omega_-)$ to obtain
\begin{equation}
\label{eq:variationalpt1}
 (\mu_0/\mu)a({\bf E},{\bf F};s/c)_{\Omega_-}  + \langle\gamma^+_t{\bf curl\,E}^{scat}\!,\pi^-{\bf F}\rangle_\Gamma = -s\mu_0\left( {\bf J},{\bf F }\right)_{\Omega_-} -\langle\gamma^+_t {\bf curl\, E}^{inc}\!,\pi_t^-{\bf F}\rangle_\Gamma,
\end{equation}
in view of \eqref{eq:nonstdtpD}. Then, testing \eqref{eq:nonstdtpB} with ${\bf G}\in{\bf H}^*$, splitting the domain of integration into $\Omega_-$ and $\Omega_+$ and using the integration by parts formulas \eqref{eq:integrationbyparts} yields
{\small \begin{align}
\nonumber
\left({\bf curl\,curl\,E}^{scat}\!,{\bf G}\right)_{\mathbb R^3\setminus\Gamma} \!\!+\! (s/c_0)^2\!\left({\bf E}^{scat}\!,{\bf G}\right)_{\mathbb R^3\setminus\Gamma}
\! =& \;\; \left({\bf curl\,E}^{scat}\!,{\bf curl\,G}\right)_{\mathbb R^3\setminus\Gamma} \!\!+ \!(s/c_0)^2\!\!\left({\bf E}^{scat}\!,{\bf G}\right)_{\mathbb R^3\setminus\Gamma} \\[1.5ex]
\nonumber
&\!-\!\langle\gamma^+_t{\bf curl\,E}^{scat}\!,\pi^+{\bf G}\rangle_\Gamma \\[1.5ex]
\nonumber
 = &\;  a({\bf E}^{scat},{\bf G};s/c_0)_{\mathbb R^3\setminus\Gamma}  - \langle\gamma^+_t{\bf curl\,E}^{scat}\!,\pi^+{\bf G}\rangle_\Gamma\\[1.5ex]
 \label{eq:variationalpt2}
 = &\; 0.
\end{align}}
where we once again made use of \eqref{eq:nonstdtpD}. Adding the two expressions \eqref{eq:variationalpt1} and \eqref{eq:variationalpt2} we recover \eqref{eq:variationaleq}.

To prove the converse, we let $({\bf E},{\bf E}^{scat})$ solve the variational problem \eqref{eq:variationalprob} and observe that \eqref{eq:nonstdtpC} and \eqref{eq:nonstdtpD} are satisfied. We then consider tests of the form $({\bf F},{\bf 0})$ and $({\bf 0},{\bf G})$ in \eqref{eq:variationaleq}, which shows that both
{\small \begin{align}
\label{eq:var1}
(\mu_0/\mu)a({\bf E},{\bf F};s/c)_{\Omega_-} + \langle\gamma^+_t{\bf curl\,E}^{scat}\!, \pi^-{\bf F}\rangle_\Gamma =\,&  -s\mu_0\left( {\bf J},{\bf F }\right)_{\Omega_-} -\langle\gamma^+_t {\bf curl\, E}^{inc}\!,\pi_t^-{\bf F}\rangle_\Gamma,\\
\label{eq:var2}
a({\bf E}^{scat},{\bf G};s/c_0)_{\mathbb R^3\setminus\Gamma} - \langle\gamma^+_t{\bf curl\,E}^{scat}\!,\pi^+{\bf G} \rangle_\Gamma =\,& 0,
\end{align}}
must hold independently of each other. Since $\gamma^-_t{\bf E}^{scat} = {\bf 0}$, equation \eqref{eq:var1} is immediately equivalent to \eqref{eq:nonstdtpA}.  Moreover, if in \eqref{eq:var2} the test function ${\bf G}$ belongs to $C^\infty(\mathbb R^3\setminus\Gamma)$ and is supported in a compact set that does not intersect $\Gamma$, we obtain
\[
a({\bf E}^{scat},{\bf G};s/c_0)_{\mathbb R^3\setminus\Gamma} = 0
\]
which is simply the distributional form of \eqref{eq:nonstdtpB}, completing the proof.
\end{proof}

%
\section{Existence, uniqueness and stability}\label{sec:6}
%

Before proceeding to show the well posedness of the problem, we remark that in the product space $\mathbb H$ defined in the Proposition \ref{prop:EquivalentBVP}, the operator $\pi({\bf E},{\bf F}):= \pi_t^-\mathbf E - \pi_t^+{\bf F}$ is in effect an abstract trace. Due to its definition in terms of the inner and outer traces $\pi_t^-$ and $\pi_t^+$, it is possible to define a linear bounded pseudoinverse that we will denote by $\pi^\dagger$. In this framework, condition \eqref{eq:variationaltrace} is in fact an essential Dirichlet boundary condition. Therefore, as it is customary for problems with non-homogeneous essential boundary conditions, we will now define the kernel of the trace by
\[
\mathbb H_0 := \left\{({\bf E},{\bf F})\in \mathbb H : \pi(\mathbf E,{\bf F}) = 0\right\},
\] 
and will procceed to show well posedness of a general Dirichlet boundary value problem involving the operator ${\rm B}$ restricted to $\mathbb H_0$. This result will be extended to the non-homogeneous case by a standard argument and will finally be applied to \eqref{eq:variationalprob}.

In the sequel, to keep the notation as compact as possible, we will write ${\bf X},{\bf Y}\in \mathbb H$ to denote pairs of the form
\[
{\bf X} = ({\bf E},{\bf F}) \quad \text{ and } \quad {\bf Y} = ({\bf U},{\bf V}) \;\in \mathbb H.
\]
Similarly, for the product norm we will write
\[
\triple{\bf X}_{|s|}^2 = \triple{\bf E}_{|s|,\mathbb R^3\setminus\Gamma}^2 + \triple{\bf F}_{|s|,\Omega_-}^2.
\] 
\begin{proposition}
\label{prop:6.1}
Let $\mathbb H^\prime$ denote the dual space of $\mathbb H$ and $\pi\mathbb H$ be its trace space, and ${\rm B}$ the bilinear form defined in \eqref{eq:variationalprobBilinear}. Given $f\in \mathbb H^\prime$ and ${\bf d}\in \pi\mathbb H $, the problem of finding ${\bf X}=({\bf E},{\bf F})\in\mathbb H$ satisfying
\begin{align*}
\pi{\bf X} =\,& {\bf d}, \\
{\rm B}\left({\bf X},{\bf Y}\right) =\,& f\left({\bf Y}\right) \qquad \forall\; {\bf Y}=({\bf U},{\bf V})\in \mathbb H_0, 
\end{align*}
has a unique solution satisfying the stability estimate
\begin{equation}
\label{eq:stabilityEstimate1}
\triple{{\bf X}}_{|s|} \lesssim \frac{|s|^2}{\sigma\underline\sigma^2}\left( \|f\| + 2\|\pi^\dagger\|\|{\bf d}\|_{-1/2, \mathrm{{\rm curl}_{\Gamma}}, \Gamma}\right).
\end{equation}
\end{proposition}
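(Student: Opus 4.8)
The strategy is the standard one for variational problems with an essential (Dirichlet-type) constraint: first reduce to a homogeneous problem by lifting the boundary datum $\mathbf d$ using the bounded pseudoinverse $\pi^\dagger$, then apply a Lax--Milgram type argument on the kernel space $\mathbb H_0$, and finally assemble the bound on $\mathbf X$ from the bounds on the lifting and on the homogeneous part. Concretely, set $\mathbf X_{\mathbf d} := \pi^\dagger \mathbf d \in \mathbb H$, so that $\pi \mathbf X_{\mathbf d} = \mathbf d$, and look for $\mathbf X_0 := \mathbf X - \mathbf X_{\mathbf d} \in \mathbb H_0$. The variational equation becomes ${\rm B}(\mathbf X_0,\mathbf Y) = f(\mathbf Y) - {\rm B}(\mathbf X_{\mathbf d},\mathbf Y)$ for all $\mathbf Y \in \mathbb H_0$, with the right-hand side a bounded functional on $\mathbb H_0$.

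\medskip
\noindent
The core is coercivity of ${\rm B}$ on $\mathbb H_0$. For $\mathbf Y = (\mathbf U,\mathbf V) \in \mathbb H_0$ we have $\pi_t^-\mathbf U = \pi_t^+\mathbf V$, so the coupling term $\langle \gamma_t^+{\bf curl\,V}, \pi_t^-\mathbf U - \pi_t^+\mathbf V\rangle_\Gamma$ vanishes. Hence, testing with $\bar s\,\overline{\mathbf Y}$ and taking real parts, \eqref{eq:energynormP3} gives
\[
{\rm Re}\big(\bar s\,{\rm B}(\mathbf Y,\overline{\mathbf Y})\big) = (\mu_0/\mu)\,\sigma\,\triple{\mathbf U}_{|s/c|,\Omega_-}^2 + \sigma\,\triple{\mathbf V}_{|s/c_0|,\mathbb R^3\setminus\Gamma}^2 \;\gtrsim\; \sigma\,\underline\sigma^2\,\triple{\mathbf Y}_{|1|}^2,
\]
where in the last step I use the scaling \eqref{eq:energynormP1} (absorbing the fixed constants $c,c_0,\mu_0/\mu$ into $\gtrsim$). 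Boundedness of ${\rm B}$ on $\mathbb H$ follows from \eqref{eq:energynormP2} for the two bilinear-form terms and from the boundedness of $\gamma_t^+$, $\pi_t^\pm$ for the coupling term; the coupling term is controlled by $\triple{\mathbf E}_{|s|}\,\triple{\mathbf F}_{|s|}$ after bounding $\gamma_t^+{\bf curl\,E}^{scat}$ by $\|{\bf curl\,curl\,E}^{scat}\|$, which in turn is $\lesssim (|s|^2/\underline\sigma^2)\triple{\mathbf E}_{|s|}$ via \eqref{eq:nonstdtpB}. This is why the continuity constant carries a factor $|s|^2/\underline\sigma^2$. Lax--Milgram then produces a unique $\mathbf X_0 \in \mathbb H_0$ with
\[
\sigma\,\underline\sigma^2\,\triple{\mathbf X_0}_{|1|}^2 \;\lesssim\; {\rm Re}\big(\bar s\,{\rm B}(\mathbf X_0,\overline{\mathbf X_0})\big) = {\rm Re}\big(\bar s\,(f - {\rm B}(\mathbf X_{\mathbf d},\cdot))(\overline{\mathbf X_0})\big) \;\lesssim\; |s|\,\big(\|f\| + \|{\rm B}(\mathbf X_{\mathbf d},\cdot)\|\big)\,\triple{\mathbf X_0}_{|s|}.
\]

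\medskip
\noindent
Finally I divide by $\triple{\mathbf X_0}_{|s|}$, use \eqref{eq:energynormP1} once more to pass between the $|1|$- and $|s|$-norms, and estimate $\|{\rm B}(\mathbf X_{\mathbf d},\cdot)\| \lesssim (|s|^2/\underline\sigma^2)\,\triple{\mathbf X_{\mathbf d}}_{|s|} \lesssim (|s|^2/\underline\sigma^2)\,\|\pi^\dagger\|\,\|\mathbf d\|_{-1/2,{\rm curl}_\Gamma,\Gamma}$ from the continuity bound above and the boundedness of $\pi^\dagger$. Collecting terms yields $\triple{\mathbf X_0}_{|s|} \lesssim (|s|^2/(\sigma\underline\sigma^2))(\|f\| + \|\pi^\dagger\|\,\|\mathbf d\|)$, and the triangle inequality $\triple{\mathbf X}_{|s|} \le \triple{\mathbf X_0}_{|s|} + \triple{\mathbf X_{\mathbf d}}_{|s|}$, with the second term absorbed into the (larger) first since $|s|^2/(\sigma\underline\sigma^2) \gtrsim 1$, gives \eqref{eq:stabilityEstimate1} with the factor $2$ in front of $\|\pi^\dagger\|\,\|\mathbf d\|$. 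The main obstacle I anticipate is bookkeeping the powers of $|s|$ and $\underline\sigma$ cleanly through the coupling term and the lifting estimate so that the final constant is exactly $|s|^2/(\sigma\underline\sigma^2)$ and independent of $s$; the coercivity itself is immediate once one notices the coupling term dies on $\mathbb H_0$.
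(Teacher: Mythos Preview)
Your overall structure (lift the datum with $\pi^\dagger$, prove coercivity on $\mathbb H_0$, apply Lax--Milgram, then use the triangle inequality) matches the paper, and your key observation that the coupling term $\langle\gamma_t^+{\bf curl\,E}^{scat},\pi_t^-\mathbf U-\pi_t^+\mathbf V\rangle_\Gamma$ vanishes on $\mathbb H_0$ is exactly right. However, two pieces of your bookkeeping are off, and together they prevent you from reaching the claimed constant $|s|^2/(\sigma\underline\sigma^2)$.

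First, for the continuity of $\mathrm{B}(\mathbf X_{\mathbf d},\cdot)$ you invoke \eqref{eq:nonstdtpB} to bound $\|{\bf curl\,curl\,E}^{scat}\|$ and thereby control $\gamma_t^+{\bf curl\,E}^{scat}$. This is circular: in the abstract Proposition~\ref{prop:6.1} neither $\mathbf X_{\mathbf d}=\pi^\dagger\mathbf d$ nor $\mathbf X_0$ is known to satisfy the PDE; that comes only afterwards. More importantly, it is unnecessary: since you only ever test with $\overline{\mathbf X_0}\in\mathbb H_0$, the coupling term vanishes in $\mathrm{B}(\mathbf X_{\mathbf d},\overline{\mathbf X_0})$ as well, and \eqref{eq:energynormP2} alone gives $|\mathrm{B}(\mathbf X_{\mathbf d},\overline{\mathbf X_0})|\lesssim \triple{\mathbf X_{\mathbf d}}_{|s|}\triple{\mathbf X_0}_{|s|}$ with no extra $|s|^2/\underline\sigma^2$ factor. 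Second, you downgrade the coercivity to $\sigma\underline\sigma^2\triple{\mathbf X_0}_{|1|}^2$ and then convert back to the $|s|$-norm at the end. If you trace the powers honestly this yields a bound of order at least $|s|^3/(\sigma\underline\sigma^4)$ on $\triple{\mathbf X_0}_{|s|}$, not $|s|^2/(\sigma\underline\sigma^2)$. The paper avoids this loss by keeping the coercivity in the $|s|$-norm from the outset, using \eqref{eq:energynormP3} exactly: $\mathrm{Re}\big(\bar s\,\mathrm{B}(\mathbf X_0,\overline{\mathbf X_0})\big)\gtrsim\sigma\triple{\mathbf X_0}_{|s|}^2$. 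Combining this with $|f(\overline{\mathbf X_0})|\le\|f\|\,\triple{\mathbf X_0}_{|1|}\le(\|f\|/\underline\sigma)\triple{\mathbf X_0}_{|s|}$ and $|\mathrm{B}(\mathbf X_{\mathbf d},\overline{\mathbf X_0})|\lesssim\triple{\mathbf X_{\mathbf d}}_{|s|}\triple{\mathbf X_0}_{|s|}\le(|s|/\underline\sigma)\|\pi^\dagger\|\|\mathbf d\|\,\triple{\mathbf X_0}_{|s|}$ gives the sharp estimate directly.
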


\begin{proof}
It is easy to verify that the bilinear form ${\rm B}$ has a bound of the form
\[
|{\rm B}({\bf X}, {\bf Y})|\lesssim\triple{\bf X}_{|s|}\triple{\bf Y}_{|s|}.
\]
Let ${\bf X}_0:=({\bf E_0},{\bf F_0})= ({\bf E},{\bf F})- \pi^\dagger{\bf d}\in \mathbb H_0$ and notice that, in this space, ${\rm B}$ is strongly elliptic in the sense that
\[
 \sigma\triple{{\bf X}_0}_{|s|}^2 \lesssim {\rm Re} \left(\overline s \,{\rm B}\left({\bf X}_0,\overline{{\bf X}}_0\right)\right),
\]
where the hidden constant depends only on the physical parameters $\epsilon,\epsilon_0, \mu,\mu_0$ and on the geometry of $\Omega_-$. Hence, by the Lax-Milgram lemma, the variational problem is uniquely solvable in $\mathbb H_0$ and we can then recover ${\bf X} = {\bf X}_0 + \pi^\dagger{\bf d}$.

To determine the stability constants we use the equivalence relations \eqref{eq:energynormP1} and compute
\begin{align*}
\left|{\rm Re} \left(\overline s \,{\rm B}\left({\bf X}_0,\overline{{\bf X}}_0\right)\right)\right| =\,&\left|{\rm Re}\left(\overline s\left(f(\overline{\bf X}_0) - {\rm B}\left(\pi^\dagger{\bf d},\overline{{\bf X}}_0\right)\right)\right)\right| \\
\lesssim \,& |s|\left( \|f\|\triple{{\bf X}_0}_{|1|} + \triple{\pi^\dagger{\bf d}}_{|s|}\triple{{\bf X}_0}_{|s|}\right) \\
\lesssim \,& |s|\triple{{\bf X}_0}_{|s|}\left( \frac{\|f\|}{\underline\sigma} + \triple{\pi^\dagger{\bf d}}_{|s|}\right)\\
\lesssim \,& |s|\triple{{\bf X}_0}_{|s|}\left( \frac{\|f\|}{\underline\sigma} + \frac{|s|}{\underline\sigma}\|\pi^\dagger\|\|{\bf d}\|_{-1/2, \mathrm{{\rm curl}_{\Gamma}}, \Gamma}\right)\\
\lesssim \,& \frac{|s|^2}{\underline\sigma^2}\triple{{\bf X}_0}_{|s|}\left( \|f\| + \|\pi^\dagger\|\|{\bf d}\|_{-1/2, \mathrm{{\rm curl}_{\Gamma}}, \Gamma}\right).
\end{align*}
Combining this result with the ellipticity estimate obtained above yields
\begin{equation}
\label{eq:stab1}
\triple{{\bf X}_0}_{|s|} \lesssim \frac{|s|^2}{\sigma\underline\sigma^2}\left( \|f\| + \|\pi^\dagger\|\|{\bf d}\|_{-1/2, \mathrm{{\rm curl}_{\Gamma}}, \Gamma}\right).
\end{equation}
On the other hand, recalling that ${\bf X} = {\bf X}_0 + \pi^\dagger{\bf d}$ we obtain:
{\small\[
\triple{{\bf X}}_{|s|}\leq \triple{{\bf X}_0}_{|s|} + \triple{\pi^\dagger{\bf d}}_{|s|} \leq \triple{{\bf X}_0}_{|s|} + \frac{|s|}{\underline\sigma}\|\pi^\dagger\|\|{\bf d}\|_{-1/2, \mathrm{{\rm curl}_{\Gamma}}, \Gamma} \leq \triple{{\bf X}_0}_{|s|} + \frac{|s|^2}{\sigma\underline\sigma^2}\|\pi^\dagger\|\|{\bf d}\|_{-1/2, \mathrm{{\rm curl}_{\Gamma}}, \Gamma}.
\]}
This estimate together with \eqref{eq:stab1} leads to \eqref{eq:stabilityEstimate1}.
\end{proof}
We are finally in position to prove the well posedness of the boundary field problem \eqref{eq:h4.5}. Thanks to the machinery developed in this and the previous section, the proof of the following Theorem will seem simple
\begin{theorem}
\label{thm:6.1}
Given data
\[
( {\bf J}, \gamma_t ^+{\bf curl \,E}^{inc},\pi^+_t{\bf E}^{inc},    ) \in \mathbf L^2(\Omega_-)\times {\bf H}^{-1/2}_{\|} ( {\rm div}_{\Gamma}, \Gamma) \times {\bf H}^{-1/2}_{\perp} ( {\rm curl}_{\Gamma}, \Gamma),
\] 
the boundary-field problem of finding functions
\[
({\bf E}, {\bf j}, {\bf  m})\in {\bf H} ({\bf curl}, \Omega_-)\times {\bf H}^{-1/2}_{\|}( {\rm div}_{\Gamma}, \Gamma)\times {\bf H}_{\perp}^{-1/2}( {\rm curl}_{\Gamma}, \Gamma)
\]
satisfying
{\small \[
\!\!\!\begin{pmatrix}
\!{\bf A}_{\Omega_-}(s/c) & s\mu\left(\pi_t^-\right)^\prime & 0 \\
\pi_t^- & \;(s\epsilon_0)^{-1}\mathcal V(s/c_0)\; & -1/2 - \widetilde{\mathcal K}(s/c_0) \!\!\! \\
0 & \left(-1/2+\mathcal K(s/c_0)\right) & (s\mu_0)^{-1}\widetilde{\mathcal V}(s/c_0) 
\end{pmatrix}\!\!\!
\begin{pmatrix}
{\bf E} \\ {\bf j} \\ {\bf m}
\end{pmatrix}
\!\! = \!\!
\begin{pmatrix}
\!-s\mu {\bf J} \!-(\mu/\mu_0)\left(\pi_t^-\right)^\prime\!\!\gamma^+_t {\bf curl\, E}^{inc} \\ \pi^+_t {\bf E}^{inc} \\ {\bf 0}
\end{pmatrix}\!,
\]}
has a unique solution that satisfies
\begin{align}
\nonumber
\triple{ \bf E}_{|s|,\Omega_-} & +
   \|{\bf j} \|_{ -1/2, {\rm div}_{\Gamma}, \Gamma} +  \| {\bf m} \|_{-1/2 ,{\rm curl}_{\Gamma}, \Gamma} \\
\label{eq:LaplaceDomainStability}
  \lesssim \,& \frac{|s|^3} {\sigma \underline{\sigma}^4} \left( \| {\bf J} \|_{{\bf L}^2(\Omega_-)} +
   \|  \gamma_t^+{\bf \tiny curl\,E}^{inc}\| _{ -1/2, {\rm div}_{\Gamma}, \Gamma}  +   \| \pi_t^+{\bf E}^{inc} \|_{-1/2 ,{\rm curl}_{\Gamma}, \Gamma}  \right)  . 
\end{align}
Furthermore, if
\[
\mathbf E^{scat} = \widetilde{\mathcal D}(s/c_0)\mathbf m - \frac{1}{s\epsilon_0}\mathcal S(s/c_0)\mathbf j,
\]
then
{\small\begin{equation}
\label{eq:h5.9}
 \triple{ {\bf E}}_{|s|, \Omega_-} \!+ \! \triple{ {\bf E}^{scat}}_{|s|, \, \mathbb{R}^3 \setminus \Gamma}  \!\lesssim\!  \frac{|s|^3} {\sigma \underline{\sigma}^3} \! \left( \| {\bf J} \|_{{\bf L}^2(\Omega_-)}  \!+\!
   \|  \gamma_t^+{\bf curl\,E}^{inc}\| _{ -1/2, {\rm div}_{\Gamma}, \Gamma} \! +   \| \pi_t^+{\bf E}^{inc} \|_{-1/2 ,{\rm curl}_{\Gamma}, \Gamma}  \right)   
\end{equation}}
\end{theorem}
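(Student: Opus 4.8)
The plan is to deduce Theorem~\ref{thm:6.1} from the equivalence between the boundary-field system \eqref{eq:h4.5} and the transmission problem \eqref{eq:nonstdtp}, and from the abstract stability result of Proposition~\ref{prop:6.1}. First I would recall (from the two preceding propositions) that solving \eqref{eq:h4.5} is equivalent to solving the variational problem \eqref{eq:variationalprob} for $({\bf E},{\bf E}^{scat})\in\mathbb H$, with the correspondence ${\bf j} = -(s\mu_0)^{-1}\jump{\gamma_t{\bf curl\,E}^{scat}}$ and ${\bf m} = -\jump{\pi_t{\bf E}^{scat}}$. Identifying ${\bf X}=({\bf E},{\bf E}^{scat})$, the right-hand side $\ell$ of \eqref{eq:variationalprobLinear} and the Dirichlet datum $\pi_t^+{\bf E}^{inc}$ play the roles of $f$ and ${\bf d}$ in Proposition~\ref{prop:6.1}. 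Existence and uniqueness for \eqref{eq:h4.5} then follow immediately; it remains only to track the constants.

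The second step is to bound $\|f\|=\|\ell\|$ and $\|{\bf d}\|$ in terms of the data. From \eqref{eq:variationalprobLinear}, $\ell({\bf F},{\bf G}) = -s\mu_0({\bf J},{\bf F})_{\Omega_-} - \langle\gamma_t^+{\bf curl\,E}^{inc},\pi_t^-{\bf F}\rangle_\Gamma$, so using Cauchy--Schwarz, the duality pairing between ${\bf H}^{-1/2}_{\|}({\rm div}_\Gamma,\Gamma)$ and ${\bf H}^{-1/2}_{\perp}({\rm curl}_\Gamma,\Gamma)$, and the boundedness of $\pi_t^-$ relative to $\triple{\cdot}_{|s|}$ (which costs a factor $|s|/\underline\sigma$ via \eqref{eq:energynormP1}), one gets $\|\ell\|\lesssim |s|\|{\bf J}\|_{{\bf L}^2(\Omega_-)} + (|s|/\underline\sigma)\|\gamma_t^+{\bf curl\,E}^{inc}\|_{-1/2,{\rm div}_\Gamma,\Gamma}$, hence $\|\ell\|\lesssim (|s|/\underline\sigma)(\|{\bf J}\| + \|\gamma_t^+{\bf curl\,E}^{inc}\|_{-1/2,{\rm div}_\Gamma,\Gamma})$. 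Plugging $\|f\|\lesssim (|s|/\underline\sigma)(\cdots)$ and ${\bf d}=\pi_t^+{\bf E}^{inc}$ into \eqref{eq:stabilityEstimate1} yields
\[
\triple{{\bf E}}_{|s|,\mathbb R^3\setminus\Gamma} + \triple{{\bf F}}_{|s|,\Omega_-} \lesssim \frac{|s|^3}{\sigma\underline\sigma^3}\left(\|{\bf J}\|_{{\bf L}^2(\Omega_-)} + \|\gamma_t^+{\bf curl\,E}^{inc}\|_{-1/2,{\rm div}_\Gamma,\Gamma} + \|\pi_t^+{\bf E}^{inc}\|_{-1/2,{\rm curl}_\Gamma,\Gamma}\right),
\]
which, since ${\bf E}^{scat}$ is precisely the ${\bf H}({\bf curl},\mathbb R^3\setminus\Gamma)$-component ${\bf E}$ of ${\bf X}$, is exactly \eqref{eq:h5.9}.

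The third step is to pass from the energy-norm bound on $({\bf E},{\bf E}^{scat})$ to the bound \eqref{eq:LaplaceDomainStability} on the triplet $({\bf E},{\bf j},{\bf m})$. Here one uses the trace estimates: ${\bf m}=-\jump{\pi_t{\bf E}^{scat}}$ is controlled by the boundedness of $\pi_t^\pm$, so $\|{\bf m}\|_{-1/2,{\rm curl}_\Gamma,\Gamma}\lesssim (|s|/\underline\sigma)\triple{{\bf E}^{scat}}_{|s|,\mathbb R^3\setminus\Gamma}$, while $s\mu_0{\bf j}=-\jump{\gamma_t{\bf curl\,E}^{scat}}$ is bounded by the boundedness of $\gamma_t^\pm$ acting on ${\bf H}({\bf curl})$, giving $\|{\bf j}\|_{-1/2,{\rm div}_\Gamma,\Gamma}\lesssim (1/|s|)\|{\bf curl\,E}^{scat}\|_{\mathbb R^3\setminus\Gamma}\lesssim (1/\underline\sigma)\triple{{\bf E}^{scat}}_{|s|,\mathbb R^3\setminus\Gamma}$ --- actually one must be a little careful: controlling $\gamma_t$ of ${\bf curl\,E}^{scat}$ requires ${\bf curl\,curl\,E}^{scat}\in{\bf L}^2$, which is available via \eqref{eq:h3.17}, so $\|{\bf curl\,curl\,E}^{scat}\|\lesssim (|s|^2/\underline\sigma^2)\triple{{\bf E}^{scat}}_{|s|}$ and therefore $\|{\bf j}\|_{-1/2,{\rm div}_\Gamma,\Gamma}\lesssim (|s|/\underline\sigma^2)\triple{{\bf E}^{scat}}_{|s|,\mathbb R^3\setminus\Gamma}$. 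Combining these with \eqref{eq:h5.9} gives the extra factor $|s|/\underline\sigma$ degradation that explains why \eqref{eq:LaplaceDomainStability} has $|s|^3/(\sigma\underline\sigma^4)$ rather than $|s|^3/(\sigma\underline\sigma^3)$.

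The main obstacle I anticipate is precisely this bookkeeping of the $\underline\sigma$ and $|s|$ powers in the trace-recovery step for ${\bf j}$: the tangential trace $\gamma_t$ is only bounded on ${\bf H}({\bf curl})$, so recovering $\jump{\gamma_t{\bf curl\,E}^{scat}}\in{\bf H}^{-1/2}_{\|}({\rm div}_\Gamma,\Gamma)$ needs control of ${\bf curl}$ of ${\bf curl\,E}^{scat}$, which is only available through the PDE \eqref{eq:nonstdtpB} and thus reintroduces a factor $(s/c_0)^2$. One has to verify that the powers combine to give exactly $|s|^3/(\sigma\underline\sigma^4)$ and no worse --- in particular that the division by $s\mu_0$ in the definition of ${\bf j}$ genuinely cancels one power of $|s|$ rather than being absorbed into a constant that could hide a worse $\underline\sigma$ dependence. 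Everything else (continuity of ${\rm B}$, the Lax--Milgram application, and the bound on $\ell$) is routine given the material already set up in Sections~\ref{sec:4}--\ref{sec:6}.
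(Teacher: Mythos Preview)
Your overall strategy is exactly the paper's: invoke the equivalence between \eqref{eq:h4.5} and \eqref{eq:variationalprob}, apply Proposition~\ref{prop:6.1} with $f=\ell$ and ${\bf d}=\pi_t^+{\bf E}^{inc}$, bound $\|\ell\|\lesssim(|s|/\underline\sigma)(\|{\bf J}\|+\|\gamma_t^+{\bf curl\,E}^{inc}\|)$ to obtain \eqref{eq:h5.9}, and then recover ${\bf j},{\bf m}$ from $\triple{{\bf E}^{scat}}_{|s|}$ via trace estimates to get \eqref{eq:LaplaceDomainStability}. The only issue is in the trace-recovery bookkeeping, which you yourself flagged: as written, your bounds are not sharp enough to reproduce the stated powers.

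For ${\bf m}$ you claim $\|{\bf m}\|\lesssim(|s|/\underline\sigma)\triple{{\bf E}^{scat}}_{|s|}$, but the correct factor is only $1/\underline\sigma$: boundedness of $\pi_t^\pm$ gives $\|{\bf m}\|\lesssim\triple{{\bf E}^{scat}}_{|1|}$, and the \emph{left} inequality in \eqref{eq:energynormP1} yields $\triple{\cdot}_{|1|}\le(1/\underline\sigma)\triple{\cdot}_{|s|}$. For ${\bf j}$ you overestimate $\|{\bf curl\,curl\,E}^{scat}\|$: from the PDE it equals $(|s|/c_0)^2\|{\bf E}^{scat}\|$, and since $|s|\,\|{\bf E}^{scat}\|\le\triple{{\bf E}^{scat}}_{|s|}$ one gets $\|{\bf curl\,curl\,E}^{scat}\|\lesssim|s|\,\triple{{\bf E}^{scat}}_{|s|}$, not $(|s|^2/\underline\sigma^2)\triple{{\bf E}^{scat}}_{|s|}$. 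The paper then combines this with $\|{\bf curl\,E}^{scat}\|\le\triple{{\bf E}^{scat}}_{|s|}$ and the division by $|s|$ to obtain
\[
\|{\bf j}\|_{-1/2,{\rm div}_\Gamma,\Gamma}\lesssim\frac{1}{|s|}\bigl(\|{\bf curl\,E}^{scat}\|+\|{\bf curl\,curl\,E}^{scat}\|\bigr)\lesssim\frac{\max\{1,|s|\}}{|s|}\,\triple{{\bf E}^{scat}}_{|s|}\lesssim\frac{1}{\underline\sigma}\,\triple{{\bf E}^{scat}}_{|s|}.
\]
Thus both trace terms cost only a single factor $1/\underline\sigma$, so the degradation from \eqref{eq:h5.9} to \eqref{eq:LaplaceDomainStability} is $1/\underline\sigma$, not $|s|/\underline\sigma$; with your stated bounds you would land at $|s|^4/(\sigma\underline\sigma^5)$ rather than the claimed $|s|^3/(\sigma\underline\sigma^4)$. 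Once you tighten these two estimates the argument is complete and identical to the paper's.
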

\begin{proof}
Unique solvability follows from Proposition \ref{prop:6.1} and the equivalence between problems \eqref{eq:h4.5} and \eqref{eq:variationalprob} established in Proposition \ref{prop:EquivalentBVP}. The stability estimate \eqref{eq:h5.9} follows readily from \eqref{eq:stabilityEstimate1} by setting $f$ equal to the linear form $\ell$ defined in \eqref{eq:variationalprobLinear} and ${\bf d} = \pi_t^+{\bf E}^{inc}$, and recalling that 
\[
\|\ell\| \lesssim\frac{|s|}{\underline\sigma} \left( \| {\bf J} \|_{{\bf L}^2(\Omega_-)}  \!+\!
   \|  \gamma_t^+{\bf curl\,E}^{inc}\| _{ -1/2,\, {\rm div}_{\Gamma}, \Gamma}\right).
\]

To obtain \eqref{eq:LaplaceDomainStability} we will make use of standard trace inequalities  (see e.g. Hsiao and Wendland \cite[5.1.11]{HsWe:2021} and Sayas \cite[A.3.2]{Sa:2016}) along with equation \eqref{eq:h3.17} to compute
\begin{align*}
 \|\, {\bf j} \|_{-1/2, \, {\rm div}_{\Gamma} , \Gamma} 
 & = \left\|\;  \jump{\, (s\mu_0)^{-1} \gamma_t {\bf curl\, E}^{scat}}\; \right\|_{-1/2, \, {\rm div}_{\Gamma},  \Gamma}  \\
&\lesssim  \frac{1}{|s|}\left(\| {\bf curl\,E}^{scat}\|_{{\bf L}^2(\mathbb{R}^3 \setminus \Gamma)} +  \| {\bf curl \, curl \,E}^{scat}\|_{ {\bf  L}^2(\mathbb{R}^3 \setminus \Gamma)}\right)
\\
& \lesssim\frac{1}{|s|}\left( \| {\bf curl\,E}^{scat}\|_{{\bf L}^2(\mathbb{R}^3 \setminus \Gamma)} + \frac{|s|^2}{c^2} \| {\bf E}^{scat}\|_{ {\bf L}^2(\mathbb{R}^3 \setminus \Gamma)}\right) \\
& \lesssim \frac{\max \{ 1, |s|\}}{|s|} \triple{ {\bf E}^{scat}}_{|s|,\,  \mathbb{R}^3 \setminus \Gamma}  \\
&\lesssim  \frac{1}{\underline{\sigma}} \triple{ {\bf E}^{scat}}_{|s|,\,  \mathbb{R}^3 \setminus \Gamma} \\[2ex] 
\| {\bf m} \|_{-1/2, \, {\rm curl}_{\Gamma} ,  \Gamma} &  = \left\| \; \jump{ \pi_t{\bf  E}^{scat}}\right\|_{-1/2,\,  {\rm curl}_{\Gamma}, \, \Gamma}  \lesssim \triple{{\bf E}^{scat}}_{1, \mathbb{R}^3 \setminus \Gamma} \lesssim  \frac{1}{\underline{\sigma}} \, \triple{ {\bf E}^{scat}}_{|s|, \mathbb{R}^3 \setminus \Gamma}.
\end{align*}  
Therefore, the two inequalities above imply that
\[
\triple{\bf E}_{|s|,\Omega_-} +  \|\, {\bf j} \|_{-1/2, \, {\rm div}_{\Gamma} , \Gamma}  + \| {\bf m} \|_{-1/2, \, {\rm curl}_{\Gamma} ,  \Gamma} \lesssim \frac{1}{\underline\sigma}\left( \triple{\bf E}_{|s|,\Omega_-} + \triple{{\bf E}^{scat}}_{|s|,\mathbb R^3\setminus\Gamma}\right)
\]
which, combined with \eqref{eq:h5.9}, leads to \eqref{eq:LaplaceDomainStability}.
\end{proof}
\begin{corollary}
\label{cor:cor6.1}
Let $\mathcal A(s)$ be the boundary-field operator implicitly defined by the left hand side of \eqref{eq:h4.5} and $\mathcal B(s): \mathbb H \to \mathbb H^\prime$ the operator associated to the right hand side of \eqref{eq:variationalprobBilinear}.
Both $\mathcal A(s)$ and $\mathcal B(s)$ are invertible and

\begin{equation}
\label{eq:InverseNorms}
\left\|\mathcal A^{-1}(s)\right\| \lesssim \frac{|s|^3}{\sigma\underline\sigma^5} \qquad \text{ and } \qquad
\left\|\mathcal B^{-1}(s)\right\| \lesssim \frac{|s|^3}{\sigma\underline\sigma^4}.
\end{equation}
\end{corollary}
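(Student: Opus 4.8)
The plan is to read both assertions off the well-posedness results already in hand, so that the proof reduces to a rescaling argument. Invertibility of $\mathcal A(s)$ is immediate from Theorem \ref{thm:6.1}: for every admissible right-hand side the system \eqref{eq:h4.5} has exactly one solution $({\bf E},{\bf j},{\bf m})$, so $\mathcal A(s)$ is a continuous bijection between Banach spaces and hence a topological isomorphism. For $\mathcal B(s)$, Proposition \ref{prop:EquivalentBVP} shows that the variational problem \eqref{eq:variationalprob} built from ${\rm B}$ is equivalent to the transmission problem \eqref{eq:nonstdtp}, and hence, through the equivalences of Section \ref{sec:5}, to \eqref{eq:h4.5}; since \eqref{eq:variationalprob} is precisely the Dirichlet problem of Proposition \ref{prop:6.1} with $f=\ell$ and ${\bf d}=\pi_t^+{\bf E}^{inc}$, it is uniquely solvable, and $\mathcal B(s)$ (understood as the solution operator of \eqref{eq:variationalprob}, with the essential condition \eqref{eq:variationaltrace} built in) is invertible.

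For the norm of $\mathcal B^{-1}(s)$ I would start from \eqref{eq:h5.9}, which already bounds $\triple{{\bf E}}_{|s|,\Omega_-}+\triple{{\bf E}^{scat}}_{|s|,\mathbb R^3\setminus\Gamma}$ by $\tfrac{|s|^3}{\sigma\underline\sigma^3}$ times the data norm. The only remaining task is to pass from the $|s|$-weighted energy norm to the graph norm of $\mathbb H={\bf H}({\bf curl},\Omega_-)\times{\bf H}^*$; by the left inequality in \eqref{eq:energynormP1} this costs exactly one factor $\underline\sigma^{-1}$, which yields $\|\mathcal B^{-1}(s)\|\lesssim\tfrac{|s|^3}{\sigma\underline\sigma^4}$.

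For $\mathcal A^{-1}(s)$ the solution carries in addition the boundary densities ${\bf j}=-(s\mu_0)^{-1}\jump{\gamma_t{\bf curl\,E}^{scat}}$ and ${\bf m}=-\jump{\pi_t{\bf E}^{scat}}$ recovered from ${\bf E}^{scat}$; bounding their trace norms by $\triple{{\bf E}^{scat}}_{|s|}$ with the standard tangential-trace inequalities is exactly the step that turns \eqref{eq:h5.9} into \eqref{eq:LaplaceDomainStability} in the proof of Theorem \ref{thm:6.1}, and it already absorbs one factor $\underline\sigma^{-1}$. Converting the remaining $\triple{{\bf E}}_{|s|,\Omega_-}$ into $\|{\bf E}\|_{{\bf curl},\Omega_-}$ through \eqref{eq:energynormP1} costs one further factor $\underline\sigma^{-1}$, and this is precisely the extra power of $\underline\sigma$ that separates the two bounds in \eqref{eq:InverseNorms}. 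Along the way one has to check that the physical data norm appearing on the right of \eqref{eq:h5.9} and \eqref{eq:LaplaceDomainStability} controls the right-hand sides of \eqref{eq:variationalprob} and \eqref{eq:h4.5}, which uses only the boundedness (and surjectivity) of $\pi_t^-$ for the term $(\pi_t^-)^\prime\gamma_t^+{\bf curl\,E}^{inc}$ and the embedding ${\bf L}^2(\Omega_-)\hookrightarrow{\bf H}^\prime({\bf curl},\Omega_-)$ for the term $-s\mu{\bf J}$.

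No individual step here is difficult; the one place where care is genuinely required — and where a miscount of the powers of $|s|$ or $\underline\sigma$ would creep in — is in being consistent about which norm (the graph norm of ${\bf H}({\bf curl})$, the energy norm $\triple{\cdot}_{|s|}$, or the dual norms they induce) is attached to each factor of the solution and data spaces, since it is exactly this choice that pins down the exponents in \eqref{eq:InverseNorms}.
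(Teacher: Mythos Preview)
Your approach is correct and matches the paper's: the corollary is stated there without proof, and you have supplied exactly the intended derivation --- read off \eqref{eq:h5.9} and \eqref{eq:LaplaceDomainStability}, then spend one factor of $\underline\sigma^{-1}$ via \eqref{eq:energynormP1} to pass from the energy norm $\triple{\cdot}_{|s|}$ to the graph norm of ${\bf H}({\bf curl})$. Your bookkeeping of the powers of $|s|$ and $\underline\sigma$ is right, and your caution about tracking which norm is in play is well placed, since that is the only point where a miscount could occur.
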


\paragraph{Remark 1.} The reader familiar with Laplace-domain analysis techniques for the analysis of wave propagation, and estimates associated to conormal derivatives of wave functions,  may have expected a bound of the form $\| {\bf j} \|_{-1/2, \, {\rm div}_{\Gamma} , \Gamma}  \lesssim  \sqrt{\frac{ |s|}{\underline{\sigma}}} \triple{{\bf E}^{scat}}_{|s|,\,  \mathbb{R}^3 \setminus \Gamma}$, instead of the factor  $\frac{1}{\underline{\sigma}}$ obtained above. The reason for this is the lack of an electromagnetic analogous to Bamberger and Ha-Duong's optimal lifting for acoustic waves \cite{BaHa:1986a, BaHa:1986b}.

\color{black}
\paragraph{Remark 2.} In all the arguments above we have assumed that only electric field information is provided as problem data. However if additionally the incident magnetic field ${\bf H}^{scat}$ is known, then the estimates from Theorem \ref{thm:6.1} and Corollary \ref{cor:cor6.1} can be improved. This comes as a simple consequence of the fact that ${\bf H}^{inc} = (s\mu_0)^{-1}{\bf curl\, E}^{inc}$ and thus $\|\ell\| \lesssim |s|\left( \| {\bf J} \|_{{\bf L}^2(\Omega_-)}  \!+\!
   \|  \gamma_t^+{\bf H}^{inc}\| _{ -1/2 {\rm div}_{\Gamma}, \Gamma}\right)$, leading to the improved estimates below.
\begin{corollary}
\label{cor:cor6.2}
Given problem data 
\[
( {\bf J}, \gamma_t ^+{\bf H}^{inc},\pi^+_t{\bf E}^{inc},    ) \in \mathbf L^2(\Omega_-)\times {\bf H}^{-1/2}_{\|} ( {\rm div}_{\Gamma}, \Gamma) \times {\bf H}^{-1/2}_{\perp} ( {\rm curl}_{\Gamma}, \Gamma),
\] 
The unique solution triplet to the boundary-field formulation \eqref{eq:h4.5} satisfies the stability estimate
\begin{align*}
\triple{ \bf E}_{|s|,\Omega_-} & +
   \|{\bf j} \|_{ -1/2, {\rm div}_{\Gamma}, \Gamma} +  \| {\bf m} \|_{-1/2 ,{\rm curl}_{\Gamma}, \Gamma} \\
  \lesssim \,& \frac{|s|^2} {\sigma \underline{\sigma}^2} \left( \| {\bf J} \|_{{\bf L}^2(\Omega_-)} +
   \|  \gamma_t^+{\bf H}^{inc}\| _{ -1/2, {\rm div}_{\Gamma}, \Gamma}  +   \| \pi_t^+{\bf\,E}^{inc} \|_{-1/2 ,{\rm curl}_{\Gamma}, \Gamma}  \right). 
\end{align*}
If ${\bf E}^{scat}$ is defined through the electromagnetic layer potentials as in \eqref{eq:h4.7} then
{\small\[
 \triple{ {\bf E}}_{|s|, \Omega_-} \!+ \! \triple{ {\bf E}^{scat}}_{|s|, \, \mathbb{R}^3 \setminus \Gamma}  \!\lesssim\!  \frac{|s|^2} {\sigma \underline{\sigma}} \! \left( \| {\bf J} \|_{{\bf L}^2(\Omega_-)}  \!+\!
   \|  \gamma_t^+{\bf H}^{inc}\| _{ -1/2, {\rm div}_{\Gamma}, \Gamma} \! +   \| \pi_t^+{\bf E}^{inc} \|_{-1/2 ,{\rm curl}_{\Gamma}, \Gamma}  \right).   
\]}
Finally, the norms of the inverse operators $\mathcal A^{-1}(s)$ and $\mathcal B^{-1}(s)$ satisfy the estimates
\[
\left\|\mathcal A^{-1}(s)\right\| \lesssim \frac{|s|^2}{\sigma\underline\sigma^3} \qquad \text{ and } \qquad
\left\|\mathcal B^{-1}(s)\right\| \lesssim \frac{|s|^2}{\sigma\underline\sigma^2}.
\]
\end{corollary}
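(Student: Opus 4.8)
The plan is to reuse the variational machinery of Propositions~\ref{prop:6.1} and~\ref{prop:EquivalentBVP} and of Theorem~\ref{thm:6.1} \emph{verbatim}: existence and uniqueness are exactly those of Theorem~\ref{thm:6.1} and do not change, so all that has to be redone is the bookkeeping of the $|s|$- and $\underline{\sigma}$-exponents. The one genuinely new ingredient is the Laplace-domain form ${\bf H}^{inc}=(s\mu_0)^{-1}{\bf curl\,E}^{inc}$ of the Faraday law \eqref{eq:h2.2} (cf. Remark~2), i.e. $\gamma_t^+{\bf curl\,E}^{inc}=s\mu_0\,\gamma_t^+{\bf H}^{inc}$. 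Substituting this into the linear form \eqref{eq:variationalprobLinear} I would write $\ell=-s\mu_0\,\widetilde\ell$ with $\widetilde\ell\big(({\bf F},{\bf G})\big):=({\bf J},{\bf F})_{\Omega_-}+\langle\gamma_t^+{\bf H}^{inc},\pi_t^-{\bf F}\rangle_\Gamma$. The point is that, unlike $\ell$, the functional $\widetilde\ell$ is bounded on $\mathbb H$ \emph{uniformly in $s$} up to the fixed trace constant: using $|({\bf J},{\bf F})_{\Omega_-}|\le\|{\bf J}\|_{{\bf L}^2(\Omega_-)}\|{\bf F}\|_{{\bf L}^2(\Omega_-)}$, the bound $\|\pi_t^-{\bf F}\|_{-1/2,{\rm curl}_{\Gamma},\Gamma}\lesssim\|{\bf F}\|_{{\bf curl},\Omega_-}$, and the identity $\triple{\cdot}_{|1|,{\bf curl},\Omega_-}=\|\cdot\|_{{\bf curl},\Omega_-}$, one gets $\|\widetilde\ell\|\lesssim\|{\bf J}\|_{{\bf L}^2(\Omega_-)}+\|\gamma_t^+{\bf H}^{inc}\|_{-1/2,{\rm div}_{\Gamma},\Gamma}$ in the $\triple{\cdot}_{|1|}$-dual norm, and hence $\|\ell\|\lesssim|s|\big(\|{\bf J}\|_{{\bf L}^2(\Omega_-)}+\|\gamma_t^+{\bf H}^{inc}\|_{-1/2,{\rm div}_{\Gamma},\Gamma}\big)$, the single power of $|s|$ coming only from the prefactor $-s\mu_0$.

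Next I would revisit the proof of Proposition~\ref{prop:6.1} and retain the \emph{sharp} exponents instead of the coarse common factor $\tfrac{|s|^2}{\sigma\underline{\sigma}^2}$ written in \eqref{eq:stabilityEstimate1}. Tracing that argument, the $f$-dependent part of ${\rm Re}\big(\bar{s}\,{\rm B}({\bf X}_0,\overline{\bf X}_0)\big)$ is bounded by $|s|\,\|f\|\,\triple{{\bf X}_0}_{|1|}\le\tfrac{|s|}{\underline{\sigma}}\|f\|\,\triple{{\bf X}_0}_{|s|}$, while the $\pi^\dagger{\bf d}$-part is bounded by $|s|\,\triple{\pi^\dagger{\bf d}}_{|s|}\triple{{\bf X}_0}_{|s|}\le\tfrac{|s|^2}{\underline{\sigma}}\|\pi^\dagger\|\,\|{\bf d}\|_{-1/2,{\rm curl}_{\Gamma},\Gamma}\triple{{\bf X}_0}_{|s|}$; dividing by $\sigma\triple{{\bf X}_0}_{|s|}$ and adding back $\triple{\pi^\dagger{\bf d}}_{|s|}$ yields
\[
\triple{{\bf X}}_{|s|}\ \lesssim\ \frac{|s|}{\sigma\underline{\sigma}}\,\|f\|\ +\ \frac{|s|^2}{\sigma\underline{\sigma}}\,\|\pi^\dagger\|\,\|{\bf d}\|_{-1/2,{\rm curl}_{\Gamma},\Gamma}.
\]
Applying this with $f=\ell$, ${\bf d}=\pi_t^+{\bf E}^{inc}$ and the bound on $\|\ell\|$ above, and recalling through Proposition~\ref{prop:EquivalentBVP} that $\triple{{\bf X}}_{|s|}=\triple{{\bf E}}_{|s|,\Omega_-}+\triple{{\bf E}^{scat}}_{|s|,\mathbb R^3\setminus\Gamma}$, produces the second displayed estimate of the corollary, with constant $\tfrac{|s|^2}{\sigma\underline{\sigma}}$.

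The remaining two claims then follow exactly as at the end of the proofs of Theorem~\ref{thm:6.1} and Corollary~\ref{cor:cor6.1}. For the triplet estimate I would use the standard trace inequalities together with \eqref{eq:h3.17} to obtain $\|{\bf j}\|_{-1/2,{\rm div}_{\Gamma},\Gamma},\ \|{\bf m}\|_{-1/2,{\rm curl}_{\Gamma},\Gamma}\lesssim\underline{\sigma}^{-1}\triple{{\bf E}^{scat}}_{|s|,\mathbb R^3\setminus\Gamma}$, which costs one further power of $\underline{\sigma}^{-1}$ and yields the constant $\tfrac{|s|^2}{\sigma\underline{\sigma}^2}$. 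The operator norms of $\mathcal A^{-1}(s)$ and $\mathcal B^{-1}(s)$ are then read off from these stability bounds by the same norm conversions used for Corollary~\ref{cor:cor6.1}, the extra $\underline{\sigma}^{-1}$ in $\|\mathcal A^{-1}(s)\|$ relative to $\|\mathcal B^{-1}(s)\|$ arising as there from passing between the boundary-field unknowns $({\bf E},{\bf j},{\bf m})$ and the pair $({\bf E},{\bf E}^{scat})$. I do not expect any genuine analytic difficulty: well-posedness and the coercivity of ${\rm B}$ are already in hand, and the only real ``obstacle'' is the discipline of not re-using the coarsened factor in \eqref{eq:stabilityEstimate1} and of consistently accounting for the power of $|s|$ saved by the Faraday substitution $\gamma_t^+{\bf curl\,E}^{inc}=s\mu_0\,\gamma_t^+{\bf H}^{inc}$.
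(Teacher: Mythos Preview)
Your proposal is correct and is precisely the argument the paper intends: the paper's entire ``proof'' is Remark~2, which records the Faraday substitution $\gamma_t^+{\bf curl\,E}^{inc}=s\mu_0\,\gamma_t^+{\bf H}^{inc}$ and the resulting bound $\|\ell\|\lesssim|s|\big(\|{\bf J}\|_{{\bf L}^2(\Omega_-)}+\|\gamma_t^+{\bf H}^{inc}\|_{-1/2,{\rm div}_\Gamma,\Gamma}\big)$, and then simply asserts the improved constants. You have correctly spotted that merely inserting this improved $\|\ell\|$ into the already-coarsened estimate \eqref{eq:stabilityEstimate1} does \emph{not} give $\tfrac{|s|^2}{\sigma\underline\sigma}$, and that one must instead return to the intermediate steps of the proof of Proposition~\ref{prop:6.1} and keep the two contributions $\tfrac{|s|}{\sigma\underline\sigma}\|f\|$ and $\tfrac{|s|^2}{\sigma\underline\sigma}\|{\bf d}\|$ separate; this is exactly the computation the paper leaves to the reader.
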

\color{black}
%
\section{Results in the time-domain}\label{sec:7}
%
Having established the properties of the operators and solutions to our problem in the Laplace domain, we can now return to the time domain and establish analogue results. Since this section deals only with time-domain results, we will use the convention that all variables are time-domain functions whose Laplace transforms were represented in the previous sections \textit{using the same character}. 
 
We will make use of results due to Laliena and Sayas \cite{LaSa:2009,Sa:2016} to transform our previous analysis into time domain statements. In order to state the result that will allow us to transfer our previous analysis we will first define a class of admissible symbols \cite[p.91]{Me:1983},  \cite{LaSa:2009}.

\begin{definition}[\bf A class of  admissible symbols]\label{def:6.1}
Let  $\mathbb X$ and $\mathbb Y$ be Banach spaces and  $\mathcal{B}(\mathbb X, \mathbb Y)$ be the set of bounded linear operators from $\mathbb X$ to $\mathbb Y$. An operator-valued analytic function $A : \mathbb{C}_+ \rightarrow \mathcal{B}(\mathbb X, \mathbb Y)$ is said to belong to the class $ \mathcal{E} (\theta, \mathcal{B}(\mathbb X, \mathbb Y))$, if there exists a real number $\theta$ such that 
\[
\|A(s)\|_{\mathbb X,\mathbb Y} \le C_A\left(\mathrm{Re} (s)\right) |s|^{\theta} \quad \mbox{for}\quad s \in \mathbb{C}_+ ,
\]
where the function $C_A : (0, \infty) \rightarrow (0, \infty) $ is non-increasing and satisfies 
\[
C_A(\sigma) \le \frac{ c}{\sigma^m} , \quad \forall \quad \sigma \in ( 0, 1]
\]
for some  $m \ge 0$ and $c$ independent of $\sigma$. 
\end{definition} 
The following proposition will be the key to transform the Laplace-domain bounds into time-domain statements.
\begin{proposition} {\em\cite{Sa:2016errata}} \label{pr:6.1}
Let $A = \mathcal{L}\{a\} \in \mathcal{E} (k + \alpha, \mathcal{B}(\mathbb X,\mathbb Y))$ with $\alpha\in [0, 1)$ and $k$ a non-negative integer.  If $ g \in \mathcal{C}^{k+1}(\mathbb{R}, \mathbb X)$ is causal and its derivative $g^{(k+2)}$ is integrable, then $a* g \in \mathcal{C}(\mathbb{R}, \mathbb Y)$ is causal and 
\[
\| (a*g)(t) \|_{\mathbb Y} \le 2^{\alpha} C_{\epsilon} (t) C_A (t^{-1}) \int_0^t \|(\mathcal{P}_2g^{(k)})(\tau) \|_{\mathbb X} \; d\tau,
\]
where 
\[
C_{\epsilon} (t) := \frac{1}{2\sqrt{\pi}} \frac{\Gamma(\epsilon/2)}{\Gamma\left( (\epsilon+1)/2 \right) } \frac{t^{\epsilon}}{(1+ t)^{\epsilon}}, \qquad (\epsilon :=  1- \alpha \; \;  \mbox{and}\; \;  \theta = k +\alpha)
\]
and 
\[
(\mathcal{P}_2g) (t) =  g + 2\dot{g} + \ddot{g}.
\]
\end{proposition}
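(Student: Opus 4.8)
This is a Lubich--Sayas-type transfer lemma, and the plan is to invert the Laplace transform along a vertical line $\{\mathrm{Re}\,s=\sigma\}$ and use integration by parts \emph{in time} to trade the polynomial growth $|s|^{k+\alpha}$ of the symbol $A$ against $k+2$ derivatives of $g$, deliberately keeping two spare derivatives so that the reduced symbol becomes integrable along vertical lines; the quoted constant then comes out of a Beta-integral evaluation after optimizing in $\sigma$.

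\emph{Step 1 — lowering the symbol.} Since $g$ is causal and $g\in\mathcal C^{k+1}(\mathbb R,\mathbb X)$, each derivative $g^{(i)}$ with $i\le k+1$ is causal and continuous, hence $g^{(i)}(0)=0$; combined with integrability of $g^{(k+2)}$ (so that $g^{(k+1)}$ is absolutely continuous) this gives $\mathcal L\{g^{(i)}\}(s)=s^{i}\mathcal L\{g\}(s)$ for all $i\le k+2$. Consequently, with $h:=\mathcal P_2 g^{(k)} = g^{(k)}+2g^{(k+1)}+g^{(k+2)}$ one has $\mathcal L\{h\}=s^{k}(1+s)^{2}\,\mathcal L\{g\}$, so that $\mathcal L\{a*g\}=A(s)\mathcal L\{g\}=B(s)\mathcal L\{h\}$ with $B(s):=A(s)\big(s^{k}(1+s)^{2}\big)^{-1}$, analytic on $\mathbb C_+$.

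\emph{Step 2 — reserve estimate and inversion.} On $\mathbb C_+$ one has $|s|<|1+s|$ (from $|1+s|^{2}-|s|^{2}=1+2\,\mathrm{Re}\,s>0$) and $|s|\le 1+|1+s|\le 2|1+s|$; writing $\epsilon:=1-\alpha\in(0,1]$ and $2-\alpha=1+\epsilon$ these yield
\[
\|B(s)\|_{\mathbb X,\mathbb Y}\ \le\ C_A(\sigma)\,\frac{|s|^{\alpha}}{|1+s|^{2}}\ =\ C_A(\sigma)\Big(\tfrac{|s|}{|1+s|}\Big)^{\alpha}\frac{1}{|1+s|^{1+\epsilon}}\ \le\ 2^{\alpha}\,C_A(\sigma)\,\frac{1}{|1+s|^{1+\epsilon}} .
\]
Because $\epsilon>0$, $B$ is integrable on every line $\{\mathrm{Re}\,s=\sigma\}$, so $b:=\mathcal L^{-1}\{B\}$ is a bounded, continuous, causal $\mathcal B(\mathbb X,\mathbb Y)$-valued function; the inversion formula together with the Beta integral $\int_{\mathbb R}\big((1+\sigma)^2+\omega^2\big)^{-(1+\epsilon)/2}\,d\omega=(1+\sigma)^{-\epsilon}\,\sqrt\pi\,\Gamma(\epsilon/2)/\Gamma((1+\epsilon)/2)$ gives
\[
\|b(t)\|_{\mathbb X,\mathbb Y}\ \le\ \frac{e^{\sigma t}}{2\pi}\int_{\mathbb R}\|B(\sigma+i\omega)\|\,d\omega\ \le\ 2^{\alpha}e^{\sigma t}\,C_A(\sigma)\,(1+\sigma)^{-\epsilon}\,\frac{\Gamma(\epsilon/2)}{2\sqrt\pi\,\Gamma((1+\epsilon)/2)} .
\]

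\emph{Step 3 — optimization and conclusion.} Choosing $\sigma=1/t$ turns $e^{\sigma t}$ into an absolute constant and $(1+\sigma)^{-\epsilon}$ into $t^{\epsilon}/(1+t)^{\epsilon}$, producing exactly the factor $C_\epsilon(t)\,C_A(t^{-1})$. By injectivity of the Laplace transform on causal tempered distributions, $a*g=b*h$; since $b$ is continuous and causal and $h$ is locally integrable and causal, $a*g\in\mathcal C(\mathbb R,\mathbb Y)$ is causal and $(a*g)(t)=\int_0^t b(t-\tau)h(\tau)\,d\tau$. Bounding $\|b(t-\tau)\|$ by the Step~2 estimate with $\sigma=1/t$ (uniformly in $\tau\in[0,t]$, using $t-\tau\le t$) and pulling the constant out of the integral yields
\[
\|(a*g)(t)\|_{\mathbb Y}\ \lesssim\ 2^{\alpha}\,C_\epsilon(t)\,C_A(t^{-1})\int_0^t\|(\mathcal P_2 g^{(k)})(\tau)\|_{\mathbb X}\,d\tau ,
\]
which is the claim. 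The pointwise inequalities above are elementary; the main obstacle is the functional-analytic bookkeeping — justifying $\mathcal L\{a*g\}=A\,\mathcal L\{g\}$ when $a$ is only a causal operator-valued tempered distribution, legitimizing the replacement $a*g=b*h$, the minimal-regularity argument forcing $g^{(i)}(0)=0$, and, if one wants the displayed constant rather than merely ``$\lesssim$'', the sharp choice of $\sigma$ together with the disposal of the residual exponential factor. All of this is carried out in \cite{LaSa:2009,Sa:2016,Sa:2016errata}.
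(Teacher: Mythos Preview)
The paper does not prove this proposition; it is quoted verbatim from \cite{Sa:2016errata} and used as a black box. Your sketch is the standard argument behind that reference (Laliena--Sayas / Sayas): factor $A(s)=B(s)\,s^{k}(1+s)^{2}$, recognise $s^{k}(1+s)^{2}\mathcal L\{g\}=\mathcal L\{\mathcal P_{2}g^{(k)}\}$, bound $B$ on vertical lines using $|s|\le 2|1+s|$, invert via the Beta integral, and optimise $\sigma=1/t$. The steps are correct and the bookkeeping you flag (vanishing initial data from causality plus continuity, the distributional convolution identity, and the residual factor $e$ from $e^{\sigma t}$ that prevents a strict equality of constants) is exactly what the cited works handle. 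So: correct proof, and there is nothing in the paper to compare against beyond the citation.
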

We now define the spaces
\begin{alignat*}{6}
\mathbb X : =\,& {\bf L}^2 (\Omega_-)\times {\bf H}^{-1/2}_{\|}( {\rm div}_{\Gamma}, \Gamma) \times {\bf H}_{\perp}^{-1/2}( {\rm curl}_{\Gamma}, \Gamma), \\
\mathbb Y_1 :=\,&  {\bf H}({\bf curl},\Omega_-)\times \mathbf H^*\,, \\
\mathbb Y_2 :=\,& {\bf H}({\bf curl},\Omega_-)\times {\bf H}^{-1/2}_{\|}( {\rm div}_{\Gamma}, \Gamma)\times {\bf H}_{\perp}^{-1/2}( {\rm curl}_{\Gamma}, \Gamma),
\end{alignat*}
and apply Proposition \ref{pr:6.1} to Theorem \ref{thm:6.1} from the previous section.
\begin{theorem}
\label{th:7.1}
Given Laplace domain problem data $({ \bf J} , \gamma_t^+{\bf curl\, E}^{inc}, \pi_t^+{\bf E}^{inc}) \in \mathbb X$,
let
\[
{\bf D} (t) := \mathcal{L}^{-1} \left\{({ \bf J} , \gamma_t^+{\bf curl\, E}^{inc}, \pi_t^+{\bf E}^{inc}) \right\}
\]
be the corresponding time-domain data. If ${\bf D} \in  \mathcal{C}^{(4)} \left([0, T],  \mathbb X \right)$ is causal, and $ {\bf D}^{(5)}$ is integrable, then $( {\bf E}, {\bf E}^{scat})\in \mathcal{C} \left([0, T], \mathbb Y_1 \right)$ and 
\[
\|  ( {\bf E}, {\bf E}^{scat})(t) \|_{\mathbb{Y}_1}   \lesssim  \frac{t^2}{1 + t} \max\{1, t^4 \}\int_0^t \|(\mathcal{P}_2{\bf D}^{(3)})(\tau) \|_{ \mathbb{X} } \; d\tau.
\]
\end{theorem}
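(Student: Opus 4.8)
The plan is to read Theorem~\ref{thm:6.1} as the statement that the Laplace-domain solution map belongs to an admissible-symbol class in the sense of Definition~\ref{def:6.1}, and then to push the estimate \eqref{eq:h5.9} into the time domain by a single application of Proposition~\ref{pr:6.1}. First, for each $s\in\mathbb C_+$ I would introduce the operator $\mathcal M(s):\mathbb X\to\mathbb Y_1$ that sends the Laplace-domain data $({\bf J},\gamma_t^+{\bf curl\,E}^{inc},\pi_t^+{\bf E}^{inc})$ to the pair $({\bf E},{\bf E}^{scat})$ furnished by Theorem~\ref{thm:6.1}, with ${\bf E}^{scat}$ given by the layer-potential representation \eqref{eq:h4.7}; this pair indeed lies in $\mathbb H=\mathbb Y_1$ because it solves the equivalent transmission problem \eqref{eq:nonstdtp}, in particular the constraint \eqref{eq:nonstdtpD}. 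The map $\mathcal M(s)$ is single-valued by the uniqueness part of Theorem~\ref{thm:6.1}, and it depends analytically on $s$ since the bilinear form $a(\cdot,\cdot;s)$, the Yukawa kernel $G(\cdot,\cdot;s/c_0)$, and hence the potentials $\widetilde{\mathcal D}(s/c_0)$ and $\mathcal S(s/c_0)$, do, and since the Lax--Milgram construction behind Propositions~\ref{prop:6.1} and \ref{prop:EquivalentBVP} preserves analyticity.

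Next, I would extract the growth bound. Estimate \eqref{eq:h5.9} controls $\triple{{\bf E}}_{|s|,\Omega_-}+\triple{{\bf E}^{scat}}_{|s|,\mathbb R^3\setminus\Gamma}$; equipping $\mathbb Y_1$ with the fixed energy norm $\triple{\cdot}_{|1|}$ and invoking the left inequality in \eqref{eq:energynormP1} to replace the $|s|$-norm by the $|1|$-norm, at the cost of one factor $\underline\sigma^{-1}$, yields
\[
\|\mathcal M(s)\|_{\mathbb X\to\mathbb Y_1}\ \lesssim\ \frac{|s|^{3}}{\sigma\,\underline\sigma^{4}},\qquad s\in\mathbb C_+ .
\]
Writing the right-hand side as $C_{\mathcal M}(\sigma)\,|s|^{\theta}$ with $\theta=3$ and $C_{\mathcal M}(\sigma):=C\,\sigma^{-1}\underline\sigma^{-4}$, one checks that $C_{\mathcal M}$ is non-increasing on $(0,\infty)$ and that $C_{\mathcal M}(\sigma)\le C\,\sigma^{-5}$ for $\sigma\in(0,1]$. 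Hence $\mathcal M\in\mathcal E\bigl(3,\mathcal B(\mathbb X,\mathbb Y_1)\bigr)$ in the notation of Definition~\ref{def:6.1}, with $k=3$, $\alpha=0$ and $m=5$, so that $\epsilon:=1-\alpha=1$.

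Then I would apply Proposition~\ref{pr:6.1} with $g={\bf D}$ and $a:=\mathcal L^{-1}\{\mathcal M\}$, so that the time-domain solution is $({\bf E},{\bf E}^{scat})=a*{\bf D}$. Since $k+1=4$ and $k+2=5$, the hypotheses ${\bf D}\in\mathcal C^{(4)}([0,T],\mathbb X)$ causal and ${\bf D}^{(5)}$ integrable are exactly those demanded by the proposition, which gives that $({\bf E},{\bf E}^{scat})\in\mathcal C([0,T],\mathbb Y_1)$ is causal and
\[
\|({\bf E},{\bf E}^{scat})(t)\|_{\mathbb Y_1}\ \le\ 2^{\alpha}\,C_{\epsilon}(t)\,C_{\mathcal M}(t^{-1})\int_0^t\bigl\|(\mathcal P_2{\bf D}^{(3)})(\tau)\bigr\|_{\mathbb X}\,d\tau .
\]
It only remains to simplify the prefactor: with $\epsilon=1$ one has $C_\epsilon(t)=\tfrac12\tfrac{t}{1+t}$, while evaluating $C_{\mathcal M}(\sigma)=C\,\sigma^{-1}\underline\sigma^{-4}$ at $\sigma=t^{-1}$ gives $C_{\mathcal M}(t^{-1})=Ct$ for $t\le 1$ and $C_{\mathcal M}(t^{-1})=Ct^{5}$ for $t\ge1$, i.e.\ $C_{\mathcal M}(t^{-1})\lesssim t\,\max\{1,t^{4}\}$; multiplying the two pieces yields $C_\epsilon(t)\,C_{\mathcal M}(t^{-1})\lesssim \tfrac{t^{2}}{1+t}\max\{1,t^{4}\}$, which is the claimed bound.

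I do not expect a genuine obstacle, since the substantive work is already contained in Theorem~\ref{thm:6.1} and Proposition~\ref{pr:6.1}. The two points deserving attention are: (i) the bookkeeping of powers of $s$ in the second step, where passing from the $|s|$-weighted energy norm on $\mathbb Y_1$ to the fixed norm raises the exponent of $\underline\sigma^{-1}$ by one and thereby pins down $\theta=3$, $m=5$, hence the exponent of $t$ in the final prefactor; and (ii) verifying that $s\mapsto\mathcal M(s)$ is analytic and single-valued, so that it genuinely qualifies as a symbol in the class $\mathcal E$ and Proposition~\ref{pr:6.1} is applicable.
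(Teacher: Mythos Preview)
Your proposal is correct and follows essentially the same route as the paper: identify the Laplace-domain solution map as a symbol in $\mathcal E(3,\mathcal B(\mathbb X,\mathbb Y_1))$ via the bound $|s|^3/(\sigma\underline\sigma^4)$ (the paper quotes this directly from Corollary~\ref{cor:cor6.1} for $\mathcal B^{-1}$, whereas you recover it from \eqref{eq:h5.9} together with \eqref{eq:energynormP1}), and then invoke Proposition~\ref{pr:6.1} with $k=3$, $\alpha=0$. If anything, you are slightly more explicit than the paper in spelling out the norm conversion and the analyticity of the solution map.
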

\begin{proof}
From \eqref{eq:InverseNorms} we see that $ \mathcal {B}^{-1}\in   \mathcal{E} (3, \mathcal{B}(\mathbb X, \mathbb Y))$.  Hence $k= 3, \alpha =0, \varepsilon = 1- \alpha =1$ and  finally $C_{\mathcal{B}^{-1} }(t^{-1}) = t \max\{ 1, t^4 \}$.
We note that
\[
( {\bf E}, {\bf E}^{scat})(t)  =  \left( \mathcal{L}^{-1} \{ {\mathcal B}^{-1}(s) \}  * {\bf D} \right)(t)
\]
which completes the proof.
\end{proof} 

Analogously, from the estimate for $\mathcal A^{-1}(s)$ in Corollary \ref{cor:cor6.1}, we have $k  = 3, \alpha =0, \varepsilon = 1$ and $ C_{ ^{-1}} (t^{-1}) =  t \max\{1, t^5\} $. Hence a similar argument proves that
\begin{theorem}
\label{th:7.2}
Given   $({ \bf J} , \gamma_t^+{\bf curl\, E}^{inc}, \pi_t^+{\bf E}^{inc}) \in \mathbb X$, if
\[
{\bf D}(t) := \mathcal{L}^{-1} \left\{({ \bf J} , \gamma_t^+{\bf curl\, E}^{inc}, \pi_t^+{\bf E}^{inc}) \right\} \in  \mathcal{C}^{(4)} ([0, T],  \mathbb{X})
\]
is causal, and $ {\bf D}^{(5)}$ is integrable, then  $( {\bf E, j, m } )$ belongs to $\mathcal{C}( [0, T], \mathbb Y_2) $ 
and 
\[
\|  ( {\bf E, j, m } ) (t) \|_{\mathbb Y_2}   \lesssim  \frac{t^2}{1 + t} \max\{1, t^5\} \int_0^t \|(\mathcal{P}_2{\bf D}^{(3)})(\tau) \|_{ \mathbb{X} } \; d\tau.
\]
\end{theorem}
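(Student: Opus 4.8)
The strategy mirrors the proof of Theorem~\ref{th:7.1}: I would apply the transfer result Proposition~\ref{pr:6.1} to the Laplace-domain solution operator, but now using the bound for $\mathcal A^{-1}(s)$ from Corollary~\ref{cor:cor6.1} instead of the one for $\mathcal B^{-1}(s)$. First I would observe that, by Theorem~\ref{thm:6.1}, for every $s\in\mathbb C_+$ the boundary-field system \eqref{eq:h4.5} has a unique solution depending linearly on the data, so that the map $\mathbb X\ni(\mathbf J,\gamma_t^+{\bf curl\,E}^{inc},\pi_t^+{\bf E}^{inc})\mapsto(\mathbf E,\mathbf j,\mathbf m)\in\mathbb Y_2$ is exactly the bounded operator $\mathcal A^{-1}(s)$ of Corollary~\ref{cor:cor6.1}. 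Since every ingredient entering \eqref{eq:h4.5}---the layer potentials $\mathcal S,\mathcal V,\widetilde{\mathcal K},\widetilde{\mathcal V},\mathcal K$, the operator $\mathbf A_{\Omega_-}$, and the (polynomial in $s$) data-to-right-hand-side map---is analytic in $s$, the function $s\mapsto\mathcal A^{-1}(s)$ is analytic with at most polynomial growth on $\mathbb C_+$. Hence it is the Laplace transform of a causal operator-valued distribution, and for causal data $\mathbf D$ one obtains the convolution identity $(\mathbf E,\mathbf j,\mathbf m)(t)=\bigl(\mathcal L^{-1}\{\mathcal A^{-1}\}*\mathbf D\bigr)(t)$.

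Next I would place $\mathcal A^{-1}$ in the admissible-symbol class of Definition~\ref{def:6.1}. By \eqref{eq:InverseNorms}, $\|\mathcal A^{-1}(s)\|_{\mathbb X,\mathbb Y_2}\lesssim|s|^{3}/(\sigma\,\underline\sigma^{5})$; the prefactor $C_{\mathcal A^{-1}}(\sigma)\simeq\sigma^{-1}\underline\sigma^{-5}$ is non-increasing on $(0,\infty)$ and is bounded by $c\,\sigma^{-6}$ on $(0,1]$, so $\mathcal A^{-1}\in\mathcal E(3,\mathcal B(\mathbb X,\mathbb Y_2))$ with $\theta=k+\alpha=3$; reading this as $k=3$, $\alpha=0$ forces $\varepsilon=1-\alpha=1$. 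Evaluating the two constants appearing in Proposition~\ref{pr:6.1} then yields $C_{\mathcal A^{-1}}(t^{-1})\simeq t\max\{1,t^{5}\}$ and $C_{\varepsilon}(t)=\tfrac{1}{2\sqrt\pi}\tfrac{\Gamma(1/2)}{\Gamma(1)}\tfrac{t}{1+t}\simeq\tfrac{t}{1+t}$.

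Finally I would invoke Proposition~\ref{pr:6.1} with $\mathbb Y=\mathbb Y_2$, $a=\mathcal L^{-1}\{\mathcal A^{-1}\}$, $g=\mathbf D$, $k=3$, $\alpha=0$: its hypotheses require $g\in\mathcal C^{k+1}(\mathbb R,\mathbb X)=\mathcal C^{4}(\mathbb R,\mathbb X)$ causal with $g^{(k+2)}=\mathbf D^{(5)}$ integrable, which are precisely the assumptions of the theorem. The proposition gives $(\mathbf E,\mathbf j,\mathbf m)=a*\mathbf D\in\mathcal C([0,T],\mathbb Y_2)$, causal, together with
\[
\|(\mathbf E,\mathbf j,\mathbf m)(t)\|_{\mathbb Y_2}\le 2^{0}\,C_{\varepsilon}(t)\,C_{\mathcal A^{-1}}(t^{-1})\int_0^t\|(\mathcal P_2\mathbf D^{(3)})(\tau)\|_{\mathbb X}\,d\tau\lesssim\frac{t^2}{1+t}\max\{1,t^{5}\}\int_0^t\|(\mathcal P_2\mathbf D^{(3)})(\tau)\|_{\mathbb X}\,d\tau,
\]
which is the claimed bound.

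Almost all of this is bookkeeping---tracking the powers of $|s|$ and $\underline\sigma$, and evaluating the two special-function constants---since the analytic heavy lifting is already contained in Corollary~\ref{cor:cor6.1}. The one point that deserves genuine care, and which I expect to be the only real obstacle, is the first step: justifying rigorously that the causal time-domain triplet is recovered as $\mathcal L^{-1}\{\mathcal A^{-1}\}*\mathbf D$. This requires the analyticity and polynomial growth of $\mathcal A^{-1}(s)$ on $\mathbb C_+$ (so that it is a legitimate Laplace-domain symbol in the sense of Definition~\ref{def:6.1}) together with the Laplace-domain uniqueness from Theorem~\ref{thm:6.1} to identify the convolution with $(\mathbf E,\mathbf j,\mathbf m)$; one should also check that the factor $s\mu$ hidden in the right-hand side of \eqref{eq:h4.5} has already been absorbed into the Corollary~\ref{cor:cor6.1} estimate, so that the growth order is $\theta=3$ and not $\theta=4$.
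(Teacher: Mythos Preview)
Your proposal is correct and follows exactly the paper's own argument: the paper proves Theorem~\ref{th:7.2} by noting that, analogously to Theorem~\ref{th:7.1}, the bound $\|\mathcal A^{-1}(s)\|\lesssim |s|^3/(\sigma\underline\sigma^5)$ from Corollary~\ref{cor:cor6.1} places $\mathcal A^{-1}\in\mathcal E(3,\mathcal B(\mathbb X,\mathbb Y_2))$ with $k=3$, $\alpha=0$, $\varepsilon=1$ and $C_{\mathcal A^{-1}}(t^{-1})=t\max\{1,t^5\}$, and then invokes Proposition~\ref{pr:6.1}. Your additional remarks on analyticity and the convolution identity are simply spelling out details the paper leaves implicit.
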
 

\color{black}Since the equations at hand are linear, that the discretization error will satisfy an analogous system with source terms related to the approximation properties of the discrete spaces utilized. This can be used to tanslate these estimates into semi-discrete and discrete error estimates for Convolution Quadrature. This follows from the fact that the estimates above can be directly applied to the transmitted and scattered error functions 
\[
{\bf e} : = {\bf E}-{\bf E}_h\,, \qquad {\bf e}^{scat} : = {\bf E}^{scat}-{\bf E}_h^{scat}\,, \qquad {\bf e_{j}} : = {\bf j} - {\bf j}_h\,, \qquad \text{ and } \qquad {\bf e_{m}} : = {\bf m} - {\bf m}_h\,,
\]
where the subscript $h$ is used to denote a semi-discrete Galerkin discretization in space. The process is outlined in detail in \cite[Section 5]{Sanchez-Vizuet:2024} for the case of a  boundary integral formulation for both transmitted and scattered fields.
 
\color{black}
%
\section{The case of coated conductors}\label{sec:8}
%
\begin{figure}[tb] \centering 
\includegraphics[width=0.4\linewidth]{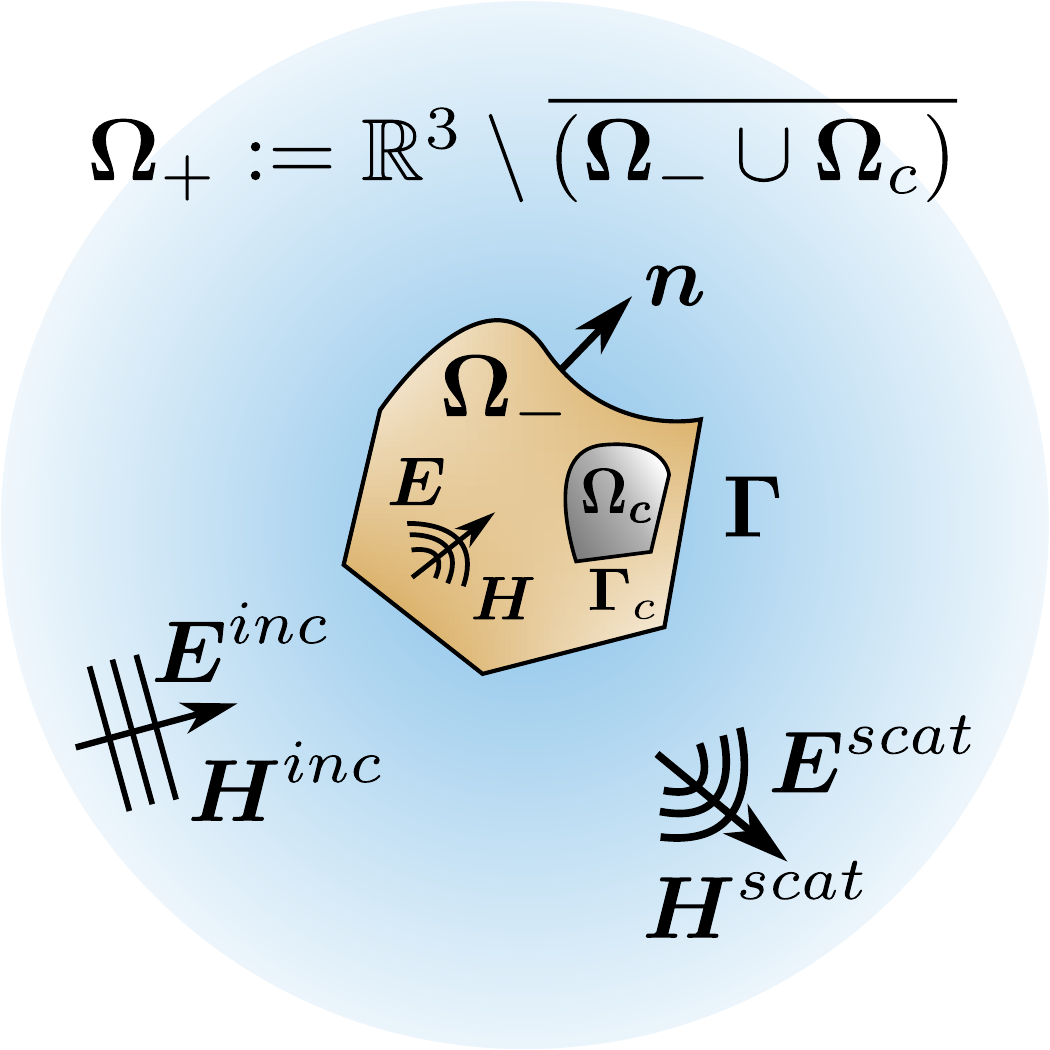}
\caption{A conducting material, contained in the region $\Omega_c$ with boundary $\Gamma_c$, is enclosed by a dielectric in the sorrounding region $\Omega_-$. The boundary of $\Omega_-$ has two disjoint components: the vacuum-dielectric interface $\Gamma$ and the dielectric-conductor interface $\Gamma_c$. The unit vector $\boldsymbol{n}$ anchored on $\Gamma_d$ and is exterior to $\Omega_-$.}\label{fig:fig2} 
\end{figure} 

The analysis toolbox that we have developped in the previous sections can be trivially extended to analyze the relevant case when the scatterer is a perfect conductor coated by a dielectric material. Since the proof process and the arguments used require only a simple modification with respect to the ones that we have used thus far, we will not go into full detail. Instead, we will point only to those details that require a small adaptation.

In this setting, depicted schematically in Figure \ref{fig:fig2}, a perfect conductor contained in an open domain $\Omega_c$ with Lipschitz boundary $\Gamma_c$ is enclosed by a bounded region $\Omega_-$ with Lipschitz boundary $\Gamma_d$. The domain $\Omega_-$ is occupied with a dielectric material and we will consider that the complementary unbounded region $\Omega_+:=\overline{\Omega_-\cup \Omega_c}$ is void, denoting by $\Gamma$ the inteface between the vacuum and dielectric media. Hence, the boundary of the dielectric can be decomposed in two disjoint components as $\Gamma_d = \Gamma \cup \Gamma_c$. The vacuum-dilectric interface $\Gamma$ coincides with its counterpart from the previous sections, while the disjoint component $\Gamma_c$ lies along the interface between the dielectric and the conductor. The assumption of $\Gamma_d$ being Lipschitz implies that the minimum distance between its two components $\Gamma$ and $\Gamma_c$ is positive.

The physical model in the vacuum region and inside the dielectric remains unchanged, as is the case with the transmission conditions in the vacuum-dielectric inteface $\Gamma$. On the other hand, inside the perfectly conducting region $\Omega_c$ the electric field must vanish identically. This, together with the conservation of the tangential component of the  electric field, implies that the tangential trace of the field ${\bf E}$ must vanish, leading to the system of time-domain governing equations
\begin{alignat*}{6}
{\bf curl} \,{\bf curl}\,  {\bf E} + c^{-2} \, \partial^2_{tt}\,{\bf E}   = \, & - \mu \partial_t {\bf J} & \quad & \text{ in } \quad \Omega_- \times (0, T), \\
 {\bf curl}\, {\bf curl}\, {\bf E}^{scat} + c_0^{-2} \, \partial^2_{tt} \,{\bf E}^{scat}  =\, &  {\bf 0} & \quad & \text{ in } \quad \Omega_+ \times (0, T), \\
 { \bf n} \times ({ \bf E} - {\bf E}^{scat})\times {\bf n}  =\, & {\bf n} \times {\bf E}^{inc}\times {\bf n} & \quad & \text{ on } \quad \Gamma\times (0,T),  \\
 {\bf n} \times { \bf curl} \, ( \mu^{-1}{\bf E} - \mu_0^{-1}{\bf E}^{scat}) =\, &  {\bf n} \times \mu_0^{-1}{\bf curl }\, {\bf E}^{inc} & \quad & \text{ on } \quad \Gamma \times (0, T),\\ 
{\bf n} \times \mu^{-1}{\bf E}\times {\bf n} =\, & {\bf 0} & \quad & \text{ on } \quad \Gamma_c \times (0, T), 
\end{alignat*}
together with initial conditions
\begin{alignat*}{8}
{\bf E}(x, 0) =&\, {\bf 0},  &\quad &  \partial_t {\bf E} (x, 0)  &=\, {\bf 0}  & \quad &  \text{ in }\, \Omega_-\,, \\
{\bf E}^{scat}(x, 0) =&\,  {\bf 0},  &\quad &  \partial_t {\bf E}^{scat}(x, 0) &=\, {\bf 0} & \quad &  \text{ in }\, \Omega_+\,.
\end{alignat*}
Just as before, assuming that ${\bf E}^{inc}$ is supported away from $\overline{\Omega_-\cup\Omega_c}$ initially, and that both ${\bf E}$ and ${\bf E}^{scat}$ are causal allows us to transform the time domain system into its Laplace domain-counterpart
\begin{subequations}
\label{eq:coatedconductor}
\begin{alignat}{6}
\label{eq:coatedconductorA}
{\bf curl} \,{\bf curl}\,  {\bf E} + (s/c)^2{\bf E}   = \, & - \mu s {\bf J} & \quad & \text{ in } \quad \Omega_- , \\
\label{eq:coatedconductorB}
 {\bf curl}\, {\bf curl}\, {\bf E}^{scat} + (s/c_0)^2 {\bf E}^{scat}  =\, &  {\bf 0} & \quad & \text{ in } \quad \Omega_+ , \\
 \label{eq:coatedconductorC}
 \pi_t^-{ \bf E} - \pi_t^+{\bf E}^{scat}  =\, & \pi_t^+{\bf E}^{inc} & \quad & \text{ on }\quad  \Gamma,  \\
 \label{eq:coatedconductorD}
 \mu^{-1}\gamma_t^-{ \bf curl \, E} - \mu_0^{-1}\gamma_t^+{\bf curl\, E}^{scat} =\, & \mu_0^{-1}\gamma_t^+{\bf curl }\, {\bf E}^{inc} & \quad & \text{ on } \quad \Gamma,\\
 \label{eq:coatedconductorE} 
\mu^{-1}\pi_t^-{\bf E} =\, & {\bf 0} & \quad & \text{ on } \quad \Gamma_c. 
\end{alignat}
\end{subequations}
 Just as in the previous sections, and for the sake of simplicity, when transforming the system into the Laplace domain we have used the convention that Laplace domain and time domain functions are represented by the same symbol. 
 
The system \eqref{eq:coatedconductor} above is essentially equal to \eqref{eq:h2.8} with the addition of the extra boundary component $\Gamma_c$, where a homogeneous boundary condition for the transmitted field ${\bf E}$ has been prescribed. Hence, we need only to modify the functional space for ${\bf E}$ in order to account for this additional essential boundary condition. This results in 
\[
{\bf H_c}({\bf curl},\Omega_-):=\left\{{\bf u}\in {\bf H}({\bf curl},\Omega_-) : \pi^-{\bf u} = {\bf 0} \;\;\text{ on }\;\;\Gamma_c  \right\}.
\] 
Replacing then every instance of the space ${\bf H}({\bf curl}, \Omega_-)$ by the space by ${\bf H_c}$ in Sections \ref{sec:4}, \ref{sec:5}, \ref{sec:6}, and \ref{sec:7}, all the results proven carry over verbatim to the coated conductor case.
%
\section*{Acknowledgements}
%
Tonatiuh S\'anchez-Vizuet has been partially funded by the U. S. National Science Foundation through the grant NSF-DMS-2137305.

{\footnotesize
\bibliographystyle{abbrv}
\bibliography{references}

\begin{thebibliography}{10}

\bibitem{AdFo:2003}
R.~Adams and J.~Fournier.
\newblock {\em Sobolev Spaces}.
\newblock Pure and Applied Mathematics. Academic Press, 2003.

\bibitem{AnKi:2004}
T.~S. Angell and A.~Kirsch.
\newblock {\em Optimization Methods in Electromagnetic Radiation}.
\newblock Springer New York, 2004.

\bibitem{BaBaSaVe:2013}
J.~Ballani, L.~Banjai, S.~Sauter, and A.~Veit.
\newblock Numerical solution of exterior {M}axwell problems by {G}alerkin {BEM}
  and {R}unge–{K}utta convolution quadrature.
\newblock {\em Numerische Mathematik}, 123(4):643–670, Oct. 2012.

\bibitem{BaLuSa:2015}
L.~Banjai, C.~Lubich, and F.-J. Sayas.
\newblock Stable numerical coupling of exterior and interior problems for the
  wave equation.
\newblock {\em Numerische Mathematik}, 129(4):611–646, June 2014.

\bibitem{BaSa:2022}
L.~Banjai and F.-J. Sayas.
\newblock {\em Integral Equation Methods for Evolutionary {PDE}: A Convolution
  Quadrature Approach}.
\newblock Springer International Publishing, 2022.

\bibitem{BrSaSa2018}
T.~S. Brown, T.~S\'anchez-Vizuet, and F.-J. Sayas.
\newblock Evolution of a semidiscrete system modeling the scattering of
  acoustic waves by a piezoelectric solid.
\newblock {\em ESAIM: Mathematical Modelling and Numerical Analysis},
  52(2):423–455, Mar. 2018.

\bibitem{BuCoSc:2002}
A.~Buffa, M.~Costabel, and C.~Schwab.
\newblock Boundary element methods for {M}axwell’s equations on non-smooth
  domains.
\newblock {\em Numerische Mathematik}, 92(4):679–710, Oct. 2002.

\bibitem{BuCoSh:2002}
A.~Buffa, M.~Costabel, and D.~Sheen.
\newblock On traces for {$\rm H(curl, \Omega)$} in {L}ipschitz domains.
\newblock {\em Journal of Mathematical Analysis and Applications},
  276(2):845–867, Dec. 2002.

\bibitem{CaHaLe:2019}
F.~Cakoni, H.~Haddar, and A.~Lechleiter.
\newblock On the factorization method for a far field inverse scattering
  problem in the time domain.
\newblock {\em SIAM Journal on Mathematical Analysis}, 51(2):854–872, Jan.
  2019.

\bibitem{Ce:1996}
M.~Cessenat.
\newblock {\em Mathematical Methods In Electromagnetism: Linear Theory And
  Applications}.
\newblock Series On Advances In Mathematics For Applied Sciences. World
  Scientific Publishing Company, 1996.

\bibitem{CMWW:2012}
Q.~Chen, P.~Monk, X.~Wang, and D.~Weile.
\newblock Analysis of convolution quadrature applied to the time-domain
  electric field integral equation.
\newblock {\em Communications in Computational Physics}, 11(2):383–399, Feb.
  2012.

\bibitem{CoKe:1983}
D.~Colton and R.~Kress.
\newblock {\em Integral equation methods in scattering theory}.
\newblock Society for Industrial and Applied Mathematics, Philadelphia, PA,
  Nov. 2013.

\bibitem{CoKe:1992}
D.~Colton and R.~Kress.
\newblock {\em Inverse Acoustic and Electromagnetic Scattering Theory}.
\newblock Springer International Publishing, 2019.

\bibitem{CoDaKo:2010}
M.~Costabel, E.~Darrigrand, and E.~Kon\'e.
\newblock Volume and surface integral equations for electromagnetic scattering
  by a dielectric body.
\newblock {\em Journal of Computational and Applied Mathematics},
  234(6):1817–1825, July 2010.

\bibitem{CoSa:2017}
M.~Costabel and F.-J. Sayas.
\newblock Time‐dependent problems with the boundary integral equation method.
\newblock {\em Encyclopedia of Computational Mechanics Second Edition}, Dec.
  2017.

\bibitem{CoSt:1988}
M.~Costabel and E.~P. Stephan.
\newblock Strongly elliptic boundary integral equations for electromagnetic
  transmission problems.
\newblock {\em Proceedings of the Royal Society of Edinburgh: Section A
  Mathematics}, 109(3–4):271–296, Jan. 1988.

\bibitem{DaKl:2000}
G.~Dassios and R.~E. Kleinman.
\newblock {\em Low frequency scattering}.
\newblock Oxford Mathematical Monographs. Oxford University Press, London,
  England, Feb. 2000.

\bibitem{DoSa:2013}
V.~Domínguez and F.-J. Sayas.
\newblock Some properties of layer potentials and boundary integral operators
  for the wave equation.
\newblock {\em Journal of Integral Equations and Applications}, 25(2), June
  2013.

\bibitem{GaHs:1995}
G.~N. Gatica and G.~C. Hsiao.
\newblock {\em Boundary-field equation methods for a class of nonlinear
  problems}.
\newblock Chapman \& Hall/CRC Research Notes in Mathematics Series. CRC Press,
  London, England, Sept. 1995.

\bibitem{BaHa:1986a}
T.~Ha-Duong, A.~Bamberger, and J.~C. Nedelec.
\newblock Formulation variationnelle espace‐temps pour le calcul par
  potentiel retard\'e de la diffraction d’une onde acoustique (i).
\newblock {\em Mathematical Methods in the Applied Sciences}, 8(1):405–435,
  Jan. 1986.

\bibitem{BaHa:1986b}
T.~Ha-Duong, A.~Bamberger, and J.~C. Nedelec.
\newblock Formulation variationnelle pour le calcul de la diffraction d’une
  onde acoustique par une surface rigide.
\newblock {\em Mathematical Methods in the Applied Sciences}, 8(1):598–608,
  Jan. 1986.

\bibitem{HsKl:1997}
G.~Hsiao and R.~Kleinman.
\newblock Mathematical foundations for error estimation in numerical solutions
  of integral equations in electromagnetics.
\newblock {\em IEEE Transactions on Antennas and Propagation}, 45(3):316–328,
  Mar. 1997.

\bibitem{HsKlSc:1989}
G.~Hsiao, R.~Kleinman, and L.~Schuetz.
\newblock {\em On Variational Formulations of Boundary Value Problems for
  Fluid-Solid Interactions}, page 321–326.
\newblock Elsevier, 1989.

\bibitem{Hs:2000}
G.~C. Hsiao.
\newblock {\em Mathematical Foundations for the Boundary-Field Equation Methods
  in Acoustic and Electromagnetic Scattering}, page 149–164.
\newblock Chapman and Hall/CRC, May 2019.

\bibitem{HsMoNi:2002}
G.~C. Hsiao, P.~B. Monk, and N.~Nigam.
\newblock Error analysis of a finite element--integral equation scheme for
  approximating the time-harmonic {M}axwell system.
\newblock {\em SIAM Journal on Numerical Analysis}, 40(1):198–219, Jan. 2002.

\bibitem{HsSa:2021a}
G.~C. Hsiao and T.~S\'anchez-Vizuet.
\newblock Time-dependent wave-structure interaction revisited:
  Thermo-piezoelectric scatterers.
\newblock {\em Fluids}, 6(3):101, Mar. 2021.

\bibitem{HsSaSa:2016}
G.~C. Hsiao, T.~S\'anchez-Vizuet, and F.-J. Sayas.
\newblock Boundary and coupled boundary–finite element methods for transient
  wave–structure interaction.
\newblock {\em IMA Journal of Numerical Analysis}, 37(1):237–265, May 2016.

\bibitem{HsSaSaWe:2018}
G.~C. Hsiao, T.~S\'anchez-Vizuet, F.-J. Sayas, and R.~J. Weinacht.
\newblock A time-dependent wave-thermoelastic solid interaction.
\newblock {\em IMA Journal of Numerical Analysis}, 39(2):924–956, Apr. 2018.

\bibitem{HsSaWe:2015}
G.~C. Hsiao, F.~Sayas, and R.~J. Weinacht.
\newblock Time‐dependent fluid‐structure interaction.
\newblock {\em Mathematical Methods in the Applied Sciences}, 40(2):486–500,
  Mar. 2015.

\bibitem{HsSa:2021b}
G.~C. Hsiao and T.~Sánchez-Vizuet.
\newblock Boundary integral formulations for transient linear thermoelasticity
  with combined-type boundary conditions.
\newblock {\em SIAM Journal on Mathematical Analysis}, 53(4):3888–3911, Jan.
  2021.

\bibitem{HsSaWe:2022}
G.~C. Hsiao, T.~Sánchez-Vizuet, and W.~L. Wendland.
\newblock A boundary-field formulation for elastodynamic scattering.
\newblock {\em Journal of Elasticity}, 153(1):5–27, Dec. 2022.

\bibitem{HsWe:2021}
G.~C. Hsiao and W.~L. Wendland.
\newblock {\em Boundary Integral Equations}.
\newblock Springer International Publishing, 2021.

\bibitem{HsWe:2021b}
G.~C. Hsiao and W.~L. Wendland.
\newblock On the propagation of acoustic waves in a
  thermo-electro-magneto-elastic solid.
\newblock {\em Applicable Analysis}, 101(11):3785–3803, Oct. 2021.

\bibitem{Jo:1964}
D.~S. Jones.
\newblock {\em The theory of electromagnetism}.
\newblock International series of monographs in pure and applied mathematics;
  v. 47. Pergamon Press; [distributed in the Western Hemisphere by Macmillan,
  New York], Oxford, 1964.

\bibitem{Ki:1989}
A.~Kirsch and G.~C. Hsiao.
\newblock Surface gradients and continuity properties for some integral
  operators in classical scattering theory.
\newblock {\em Mathematical Methods in the Applied Sciences}, 11(6):789–804,
  Nov. 1989.

\bibitem{LaSa:2009}
A.~R. Laliena and F.-J. Sayas.
\newblock Theoretical aspects of the application of convolution quadrature to
  scattering of acoustic waves.
\newblock {\em Numerische Mathematik}, 112(4):637–678, Mar. 2009.

\bibitem{LiMoWe:2015}
J.~Li, P.~Monk, and D.~Weile.
\newblock {\em Time Domain Integral Equation Methods in Computational
  Electromagnetism}, page 111–189.
\newblock Springer International Publishing, 2015.

\bibitem{Lu:1988}
C.~Lubich.
\newblock Convolution quadrature and discretized operational calculus. i.
\newblock {\em Numerische Mathematik}, 52(2):129–145, Jan. 1988.

\bibitem{Lu:1994}
C.~Lubich.
\newblock On the multistep time discretization of linear\newline
  initial-boundary value problems and their boundary integral equations.
\newblock {\em Numerische Mathematik}, 67(3):365–389, Apr. 1994.

\bibitem{Ma:2021}
P.~A. Martin.
\newblock {\em Time-Domain Scattering}.
\newblock Cambridge University Press, June 2021.

\bibitem{MaOl:1993}
P.~A. Martin and P.~Ola.
\newblock Boundary integral equations for the scattering of electromagnetic
  waves by a homogeneous dielectric obstacle.
\newblock {\em Proceedings of the Royal Society of Edinburgh: Section A
  Mathematics}, 123(1):185–208, 1993.

\bibitem{Me:1983}
E.~Meister.
\newblock {\em Integraltransformationen Mit Anwendungen Auf Probleme Der
  Mathematischen Physik}.
\newblock Methoden und Verfahren der mathematischen Physik. Peter Lang Gmbh,
  Internationaler Verlag Der Wissenschaften, Dec. 1983.

\bibitem{Monk:2003}
P.~Monk.
\newblock {\em Finite Element Methods for {M}axwell’s Equations}.
\newblock Oxford University Press, Apr. 2003.

\bibitem{Mu:1969}
C.~M\"{u}ller.
\newblock {\em Foundations of the Mathematical Theory of Electromagnetic
  Waves}.
\newblock Springer Berlin Heidelberg, 1969.

\bibitem{Ne:2001}
J.~N\'edelec.
\newblock {\em Acoustic and Electromagnetic Equations: Integral Representations
  for Harmonic Problems}.
\newblock Applied Mathematical Sciences. Springer, 2001.

\bibitem{NiKoLu:2022}
J.~Nick, B.~Kov\'acs, and C.~Lubich.
\newblock Time-dependent electromagnetic scattering from thin layers.
\newblock {\em Numerische Mathematik}, 150(4):1123–1164, Mar. 2022.

\bibitem{PiWe:2006}
G.~Pisharody and D.~Weile.
\newblock Electromagnetic scattering from homogeneous dielectric bodies using
  time-domain integral equations.
\newblock {\em IEEE Transactions on Antennas and Propagation}, 54(2):687–697,
  Feb. 2006.

\bibitem{PoMi:1973}
A.~POGGIO and E.~MILLER.
\newblock {\em Integral Equation Solutions of Three-dimensional Scattering
  Problems}, page 159–264.
\newblock Elsevier, 1973.

\bibitem{Sanchez-Vizuet:2024}
T.~S\'anchez-Vizuet.
\newblock A boundary integral equation formulation for transient
  electromagnetic transmission problems on {L}ipschitz domains, 2024.

\bibitem{SaSa2016}
T.~S\'anchez-Vizuet and F.-J. Sayas.
\newblock Symmetric boundary-finite element discretization of time dependent
  acoustic scattering by elastic obstacles with piezoelectric behavior.
\newblock {\em Journal of Scientific Computing}, 70(3):1290–1315, Sept. 2016.

\bibitem{Sa:2016errata}
F.-J. Sayas.
\newblock Errata to: Retarded potentials and time domain boundary integral
  equations: a road-map, 2016.

\bibitem{Sa:2016}
F.-J. Sayas.
\newblock {\em Retarded Potentials and Time Domain Boundary Integral
  Equations}.
\newblock Springer International Publishing, 2016.

\bibitem{So:1964}
A.~Sommerfeld.
\newblock {\em Partial Differential Equations in Physics}, volume~IV of {\em
  Lectures on Theoretical Physics}.
\newblock Academic Press, 1964.

\bibitem{St:1986}
E.~P. Stephan.
\newblock Boundary integral equations for magnetic screens in {$\mathbb R^3$}.
\newblock {\em Proceedings of the Royal Society of Edinburgh: Section A
  Mathematics}, 102(3–4):189–210, 1986.

\bibitem{St:1941}
J.~A. Stratton.
\newblock {\em Electromagnetic Theory}.
\newblock IEEE Press Series on Electromagnetic Wave Theory. Wiley-Blackwell,
  Hoboken, NJ, Jan. 2007.

\bibitem{StCh:1939}
J.~A. Stratton and L.~J. Chu.
\newblock Diffraction theory of electromagnetic waves.
\newblock {\em Physical Review}, 56(1):99–107, July 1939.

\bibitem{ToCh:2020}
M.~S. Tong and W.~C. Chew.
\newblock {\em The Nystr{\"o}m Method in Electromagnetics}.
\newblock Wiley-IEEE Press, 2020.

\bibitem{WePiChShMi:2004}
D.~Weile, G.~Pisharody, N.-W. Chen, B.~Shanker, and E.~Michielssen.
\newblock A novel scheme for the solution of the time-domain integral equations
  of electromagnetics.
\newblock {\em IEEE Transactions on Antennas and Propagation}, 52(1):283–295,
  Jan. 2004.

\end{thebibliography}
}

\end{document}